\begin{document} 
\newtheorem{prop}{Proposition}[section]
\newtheorem{Def}{Definition}[section] \newtheorem{theorem}{Theorem}[section]
\newtheorem{lemma}{Lemma}[section] \newtheorem{Cor}{Corollary}[section]

\title[Unconditional uniqueness for MKG]{\bf Unconditional global well-posedness in energy space for the Maxwell-Klein-Gordon system in temporal gauge}
\author[Hartmut Pecher]{
{\bf Hartmut Pecher}\\
Fachbereich Mathematik und Naturwissenschaften\\
Bergische Universit\"at Wuppertal\\
Gau{\ss}str.  20\\
42119 Wuppertal\\
Germany\\
e-mail {\tt pecher@math.uni-wuppertal.de}}
\date{}

\begin{abstract}
The Maxwell-Klein-Gordon system in temporal gauge is unconditionally globally well-posed in energy space, especially uniqueness holds in the natural solution space. This improves earlier results where uniqueness was only shown in a suitable subspace. It is also locally well-posed for large data below energy space.
\end{abstract}
\maketitle
\renewcommand{\thefootnote}{\fnsymbol{footnote}}
\footnotetext{\hspace{-1.5em}{\it 2010 Mathematics Subject Classification:} 
35Q40, 35L70 \\
{\it Key words and phrases:} Maxwell-Klein-Gordon, unconditional uniqueness,
global well-posedness, temporal gauge}
\normalsize 
\setcounter{section}{0}
\section{Introduction and main results}
\noindent Consider the Maxwell-Klein-Gordon equations
\begin{align}
\label{1.1}
\partial^{\alpha} F_{\alpha \beta} & = Im(\phi \overline{D_{\beta} \phi}) \\
\label{1.2}
D^{\mu}D_{\mu} \phi &= 0
\end{align}
in Minkowski space $\mathbb{R}^{1+3} = \mathbb{R}_t \times \mathbb{R}^3_x$ with metric $diag(-1,1,1,1)$. Greek indices run over $\{0,1,2,3\}$, Latin indices over $\{1,2,3\}$, and the usual summation convention is used.  Here
$$ \phi: \mathbb{R}\times \mathbb{R}^3 \to \mathbb{C} \, , \, A_{\alpha}: \mathbb{R} \times \mathbb{R}^3 \to \mathbb{R} \, , \, F_{\alpha \beta} = \partial_{\alpha} A_{\beta} - \partial_{\beta} A_{\alpha} \, , \, D_{\mu} = \partial_{\mu} - iA_{\mu} \, . $$
$A_{\mu}$ are the gauge potentials, $F_{\mu \nu}$ is the curvature. We use the notation $\partial_{\mu} = \frac{\partial}{\partial x_{\mu}}$, where we write $(x^0,x^1,x^2,x^3)=(t,x^1,x^2,x^3)$ and also $\partial_0 = \partial_t$.

Setting $\beta =0$ in (\ref{1.1}) we obtain the Gauss-law constraint
\begin{equation}
\label{1.3}
\partial^j F_{j 0} = Im(\phi \overline{D_0 \phi})  \, .
\end{equation}
The energy conservation law reads as follows:
\begin{equation}
\label{1.4}
E(t) = \frac{1}{2} \int_{\mathbb{R}^3} \big( \sum_{i=1}^3 F^2_{0i}(t,x) + \sum_{i<j,j=1}^3 F^2_{ij}(t,x) + \sum_{\mu=0}^3 |D_{\mu} \phi(t,x)|^2 \big) dx = E(0) \, .
\end{equation}
The system (\ref{1.1}),(\ref{1.2}) is invariant under the gauge transformations
$$A_{\mu} \to A'_{\mu} = A_{\mu} + \partial_{\mu} \chi \, , \, \phi \to \phi' = e^{i \chi} \phi \, , \, D_{\mu} \to D'_{\mu} = \partial_{\mu} - i A'_{\mu} \, .$$
This allows to impose an additional gauge condition. We exclusively consider the temporal gauge
\begin{equation}
\label{1.5}
A_0 = 0 \, .
\end{equation}
Under this gauge the system (\ref{1.1}),(\ref{1.2}) is given by
\begin{align}
\label{1.6}
\partial_t \partial^j A_j &= - Im(\phi \overline{\partial_t \phi}) \\
\Box A_j - \partial_j(\partial^k A_k) & = Im(\phi \overline{\partial_j \phi} + i A_j |\phi|^2) \\
\Box \phi &= i (\partial^k A_k) \phi + 2i A^k \partial_k \phi + A^k A_k \phi  \, .
\end{align}
Other choices of the gauge are the Coulomb gauge condition $\partial^j A_j =0$ and the Lorenz gauge condition $\partial^{\mu} A_{\mu} = 0$. 
The classical Maxwell-Klein-Gordon system has been studied by Klainerman and Machedon \cite{KM} where the existence of global solutions for data in energy space and above in Coulomb gauge was shown. Uniqueness in a suitable subspace was also shown. For the temporal gauge they also showed a similar result by using a suitable gauge transformation applied to the solution constructed in Coulomb gauge. They made use of a null structure for the main bilinear term to achieve this result. Global well-posedness below energy space was shown by Keel, Roy and Tao \cite{KRT} and local well-posedness almost down to the critical regularity with respect to scaling by Machedon and Sterbenz \cite{MS}. The problem in Lorenz gauge was considered by Selberg and Tesfahun \cite{ST}, who detected a null structure also in this case, and proved global well-posedness in energy space, especially also unconditional uniqueness in this space. Local well-posedness below energy space in Bourgain-Klainerman-Machedon spaces of $X^{s,b}$-type was shown by the author \cite{P}.

The problem in temporal gauge was treated by Yuan \cite{Y} directly in 
$X^{s,b}$-spaces. He stated local well-posedness in $X^{s,b}$-spaces for large data below energy norm, where he just referred to the estimates given for Tao's small data local well-posedness results \cite{T1} in the Yang-Mills case. As a consequence  he
proved existence of a global solution in energy space and also uniqueness in subspaces of $X^{s,b}$-type. Unconditional uniqueness in the natural solution spaces remained open. 

We prove unconditional global well-posedness in energy space and above in the temporal gauge (Theorem \ref{Theorem}). We make use of Tao's estimates \cite{T1} and Yuan's result \cite{Y}. We first detail Yuan's proof of the local existence and uniqueness result for data below energy space. Yuan combined this with energy conservation to achieve the global well-posedness result in $X^{s,b}$-spaces (Theorem \ref{Theorem2}). In the second part we therefore concentrate on the unconditional uniqueness result in the natural solution spaces. We show that any finite energy solution belongs to those spaces of $X^{s,b}$-type below energy norm where uniqueness was shown in the step before (Proposition \ref{Prop.1}). Of course we also need the null structure of some of the nonlinearities, the bilinear estimates for wave-Sobolev spaces $X^{s,b}_{|\tau|=|\xi|}$ by d'Ancona, Foschi and Selberg \cite{AFS}, and Tao's hybrid estimates \cite{T1} for the product of functions in wave-Sobolev spaces $X^{s,b}_{|\tau|=|\xi|}$ and in product Sobolev spaces $X^{s,b}_{\tau = 0}$ (cf. the definition of the spaces below).

Our main result is the following:
\begin{theorem}
\label{Theorem}
Assume $s \ge 1$. For $j=1,2,3$ let $a_j \in H^s({\mathbb R}^3)$ , $a_j' \in H^{s-1}({\mathbb R}^3)$,   $\phi_0 \in H^s({\mathbb R}^3)$ , $\phi_1 \in H^{s-1}({\mathbb R}^3)$ be given satisfying $\partial^j a_j' = - Im(\phi_0 \overline{\phi_1})$ . The Maxwell-Klein-Gordon system (\ref{1.1}),(\ref{1.2}) under the temporal gauge condition $A_0 = 0$
has an (unconditionally) unique global solution
\begin{equation}
\label{1.15}
\phi \in C^0({\mathbb R},H^s({\mathbb R}^3)) \cap C^1({\mathbb R},H^{s-1}({\mathbb R}^3)) \, , \, A_j \in C^0({\mathbb R},H^s({\mathbb R}^3)) \cap C^1({\mathbb R},H^{s-1}({\mathbb R}^3))
\end{equation}
satisfying the initial conditions
\begin{equation}
\label{1.16}
\phi(0) = \phi_0 \, , \, \partial_t \phi(0) = \phi_1 \, , \, A_j(0) = a_j \, \, \partial_t A_j(0) = a_j' \, . 
\end{equation}
\end{theorem}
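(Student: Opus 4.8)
The plan is to split the two assertions of the theorem — global existence (with higher-regularity persistence and continuous dependence) and unconditional uniqueness — and to reduce each to results already available.

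For \emph{existence}, I would simply invoke Theorem \ref{Theorem2}: for the given data satisfying the constraint $\partial^j a_j' = -Im(\phi_0\overline{\phi_1})$ it produces a global solution of \eqref{1.6}--\eqref{1.8} in the $X^{s,b}$-type spaces with some $b>1/2$. Since $X^{s,b}[0,T]\hookrightarrow C^0([0,T],H^s)$ for $b>1/2$ and the time derivatives $\partial_t\phi$, $\partial_t A_j$ are recovered from the equations themselves, this solution lies in the class \eqref{1.15} and satisfies \eqref{1.16}; the Gauss constraint \eqref{1.6} imposed at $t=0$ propagates in time, and energy conservation \eqref{1.4} keeps the $H^1\times L^2$-norm bounded on all of $\mathbb{R}$ (the system being energy-subcritical in $\mathbb{R}^{1+3}$), which is what makes the local solution global and its $H^s$-norm finite on bounded intervals. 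Continuous dependence on the data is inherited from the contraction estimates.

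For \emph{unconditional uniqueness} — the genuinely new point — let $(\phi,A_j)$ and $(\tilde\phi,\tilde A_j)$ both solve \eqref{1.6}--\eqref{1.8} with the same data, now merely assumed to lie in the class \eqref{1.15}. Since $s\ge 1$, both are in particular finite-energy solutions, so Proposition \ref{Prop.1} applies to each: on every interval $[0,T]$ both solutions already belong to the subenergy $X^{\sigma,b}$-type spaces (with $\sigma<1$ close to $1$, still $b>1/2$) in which the conditional uniqueness part of Theorem \ref{Theorem2} — Yuan's uniqueness — holds. Hence $\phi=\tilde\phi$ and $A_j=\tilde A_j$ on $[0,T]$, and since $T$ is arbitrary, on all of $\mathbb{R}$.

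The hard part is Proposition \ref{Prop.1}: upgrading the a priori regularity $C^0H^1\cap C^1L^2$ of a finite-energy solution to the $X^{\sigma,b}$ bounds. I would run a Duhamel/bootstrap argument exploiting three structural features. First, equation \eqref{1.6} is first order in $\partial_t$ for the curl-free part of $A_j$, with right-hand side $-Im(\phi\overline{\partial_t\phi})$; that part is thus a time integral of a quadratic term and behaves better than a generic $H^1$ function. Second, after a Hodge decomposition $A_j = A_j^{df}+A_j^{cf}$, the bilinear terms $2iA^k\partial_k\phi$ and $Im(\phi\overline{\partial_j\phi})$ carry a $Q_{ij}$-type null structure in their principal divergence-free$\,\times\,$wave interactions. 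Third, these null forms, together with the remaining products, are estimated by the d'Ancona--Foschi--Selberg bilinear estimates \cite{AFS} for the wave-Sobolev spaces $X^{s,b}_{|\tau|=|\xi|}$ and by Tao's hybrid estimates \cite{T1} for products of a wave-Sobolev factor with a $\tau=0$ product-Sobolev factor. Expressing $\phi$, $A_j^{df}$ and $A_j^{cf}$ through Duhamel's formula and iterating, one closes the estimates at a regularity $\sigma$ just below $1$, which is exactly what Yuan's conditional uniqueness requires. I expect the bookkeeping of the various frequency interactions — in particular the resonant high$\,\times\,$high and low$\,\times\,$high regimes where the null structure and the hybrid estimates must be used together — to be the main technical obstacle.
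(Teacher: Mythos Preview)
Your strategy is essentially the paper's: existence from Theorem~\ref{Theorem2}, and unconditional uniqueness by showing via Proposition~\ref{Prop.1} that any finite-energy solution already lies in the sub-energy $X^{s,b}$-spaces where Yuan's conditional uniqueness applies. Your sketch of how Proposition~\ref{Prop.1} is proved --- Duhamel for $\phi_\pm,A^{df}_\pm,A^{cf}$, the $Q_{ij}$ null structure on the divergence-free interactions, the AFS bilinear atlas, and Tao's hybrid $X^{s,b}_{|\tau|=|\xi|}\times X^{s,b}_{\tau=0}$ estimates --- matches the paper's Claims~1--6.

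One step you skip, however, is needed to make the uniqueness argument close. Proposition~\ref{Prop.1} carries the hypothesis $A^{cf}(0)=0$, and the conditional uniqueness in Theorem~\ref{Theorem2}, part~1, is likewise stated under (\ref{*}). A generic data set $(a_j,a_j')$ in Theorem~\ref{Theorem} does not satisfy this. The paper handles it by first applying the gauge transformation (\ref{GT}), $\chi=(-\Delta)^{-1}\mathrm{div}\,A(0)$, which preserves the temporal gauge and the regularity class (\ref{1.15}) and forces $A'^{cf}(0)=0$; since $\chi$ depends only on the common initial data, both putative solutions are transformed by the \emph{same} $\chi$, so after invoking Proposition~\ref{Prop.1} and the conditional uniqueness on the transformed pair one undoes the gauge to conclude $(\phi,A)=(\tilde\phi,\tilde A)$. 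Without this reduction Proposition~\ref{Prop.1} does not directly apply to your two solutions.
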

\noindent
{\bf Remark:} It is very likely that a similar unconditional local well-posedness result holds even below energy norm, i.e. for $s$ less than but close enough to 1, because there is some space in most of the estimates, but we do not persue this further.

We denote the Fourier transform with respect to space and time and with respect to space by $\,\widehat{}\,$ and ${\mathcal F}$, respectively. The operator
$|\nabla|^{\alpha}$ is defined by $(|\nabla|^{\alpha} f)(\xi) = |\xi|^{\alpha} ({\mathcal F}f)(\xi)$ and similarly $ \langle \nabla \rangle^{\alpha}$. $\Box = \partial_t^2 - \Delta$ is the d'Alembert operator. The inhomogeneous Sobolev spaces are denoted by $H^{s,p}$. For $p=2$ we simply denote them by $H^s$. We repeatedly use the Sobolev embeddings $H^{s,p} \subset L^q$ for  $1<p\le q < \infty$ and $\frac{1}{q} \ge \frac{1}{p}-\frac{s}{2}$. \\
$a+ := a + \epsilon$ for a sufficiently small $\epsilon >0$ , so that $a<a+<a++$ , and similarly $a--<a-<a$ , and $\langle \cdot \rangle := (1+|\cdot|^2)^{\frac{1}{2}}$ .

The standard spaces $X^{s,b}_{\pm}$ of Bourgain-Klainerman-Machedon type (which were already considered by M. Beals \cite{B}) belonging to the half waves are the completion of the Schwarz space  $\mathcal{S}({\mathbb R}^4)$ with respect to the norm
$$ \|u\|_{X^{s,b}_{\pm}} = \| \langle \xi \rangle^s \langle  \tau \pm |\xi| \rangle^b \widehat{u}(\tau,\xi) \|_{L^2_{\tau \xi}} \, . $$ 
Similarly we define the wave-Sobolev spaces $X^{s,b}_{|\tau|=|\xi|}$ with norm
$$ \|u\|_{X^{s,b}_{|\tau|=|\xi|}} = \| \langle \xi \rangle^s \langle  |\tau| - |\xi| \rangle^b \widehat{u}(\tau,\xi) \|_{L^2_{\tau \xi}}  $$ and also $X^{s,b}_{\tau =0}$ with norm 
$$\|u\|_{X^{s,b}_{\tau=0}} = \| \langle \xi \rangle^s \langle  \tau  \rangle^b \widehat{u}(\tau,\xi) \|_{L^2_{\tau \xi}} \, .$$
We also define $X^{s,b}_{\pm}[0,T]$ as the space of the restrictions of functions in $X^{s,b}_{\pm}$ to $[0,T] \times \mathbb{R}^3$ and similarly $X^{s,b}_{|\tau| = |\xi|}[0,T]$ and $X^{s,b}_{\tau =0}[0,T]$. We frequently use the estimates $\|u\|_{X^{s,b}_{\pm}} \le \|u\|_{X^{s,b}_{|\tau|=|\xi|}}$ for $b \le 0$ and the reverse estimate for $b \ge 0$.

\section{Preparations and reformulation of the problem}
We decompose $A=(A_1,A_2,A_3)$ into its divergence-free part $A^{df}$ and its curl-free part $A^{cf}$ :
\begin{equation}
\label{1.9}
A = A^{df} + A^{cf} \, ,
\end{equation}
where
\begin{equation}
\label{1.10}
 A^{df} = (-\Delta)^{-1} curl \,curl\, A \quad , \quad A^{cf} = - (-\Delta)^{-1} \nabla\, div \,A \, .
 \end{equation}
Let $P = (-\Delta)^{-1} curl \,curl$ denote the projection operator onto the divergence free part. Then we obtain the equivalent system
\begin{align}
\label{1.11}
\partial_t A^{cf} &= - (-\Delta)^{-1} \nabla Im(\phi \overline{\partial_t \phi}) \\
\label{1.12}
\Box A^{df} & = -P( Im(\phi \overline{ \nabla \phi}) + iA |\phi|^2) \\
\label{1.13}
\Box \phi & = i(\partial^j A_j^{cf}) \phi + 2i A^{df}_j \partial^j \phi +2i A_j^{cf} \partial^j \phi + A^j A_j \phi \, ,
\end{align}
where $A$ is replaced by (\ref{1.9}).

Klainerman and Machedon showed that $A^{df} \cdot \nabla \phi$ and $P(\phi \overline{\nabla \phi})_k$ are null forms. An elementary calculation namely shows that
\begin{align}
\label{1.13'}
2 A^{df}_i \partial^i \phi & = Q_{ij}(\phi,|\nabla|^{-1}(R^i A^j- R^j A^i)) \\
\label{1.12'}
P(\phi \overline{\nabla \phi})_k & = -2i R^j |\nabla|^{-1} Q_{kj}(Re \phi, Im \phi) \, ,
\end{align}
where the null form $Q_{ij}$ is defined by
$$ Q_{ij}(u,v):= \partial_i u \partial_j v - \partial_j u \partial_i v $$
and the Riesz transform by $R_j := |\nabla|^{-1} \partial_j$.

For our further considerations it is also important that the gauge invariance allows to assume besides (\ref{1.5}) the assumption
\begin{equation}
\label{1.14}
A^{cf}(0) = A^{cf}(0,x) = 0 \, .
\end{equation}
One only has to choose 
\begin{equation}
\label{GT}
 \chi(x) = (-\Delta)^{-1} div\, A(0,x) \, .
 \end{equation}
This implies 
$ A_0' = A_0 =0$ and $$A'^{cf}(0)= A^{cf}(0) + \nabla \chi= -(-\Delta)^{-1} \nabla\, div\, A(0) +  (-\Delta)^{-1} \nabla \,div\, A(0) = 0 \, . $$

Defining
\begin{align*}
\phi_{\pm} = \frac{1}{2}(\phi \pm i \langle \nabla \rangle^{-1} \partial_t \phi)&
 \Longleftrightarrow \phi=\phi_+ + \phi_- \, , \, \partial_t \phi = i \langle \nabla \rangle (\phi_+ - \phi_-) \\
 A^{df}_{\pm} = \frac{1}{2}(A^{df} \pm i \langle \nabla \rangle^{-1} \partial_t A^{df}) & \Longleftrightarrow A^{df} = A^{df}_+ + A_-^{df} \, , \, \partial_t A^{df} = i \langle \nabla \rangle(A^{df}_+ - A^{df}_-)
 \end{align*}
 we can rewrite (\ref{1.11}),(\ref{1.12}),(\ref{1.13}) as
 \begin{align}
 \label{1.11*}
 \partial_t A^{cf} &= - (-\Delta)^{-1} \nabla Im(\phi \overline{\partial_t \phi}) \\
 \label{1.12*}
(i \partial_t \pm \langle \nabla \rangle)A_{\pm} ^{df} & = \pm 2^{-1} \langle \nabla \rangle^{-1} ( R.H.S. \, of \, (\ref{1.12}) + A^{df}) \\
\label{1.13*}
(i \partial_t \pm \langle \nabla \rangle) \phi_{\pm} &= \pm 2^{-1} \langle \nabla \rangle^{-1}( R.H.S. \, of \, (\ref{1.13}) + \phi) \, .
\end{align}
The initial data are transformed as follows:
\begin{align}
\label{1.14*}
\phi_{\pm}(0) &= \frac{1}{2}(\phi(0) \pm i^{-1} \langle \nabla \rangle^{-1} \partial_t \phi(0)) \\
\label{1.15*}
A^{df}_\pm(0) & = \frac{1}{2}(A^{df}(0) \pm i^{-1} \langle \nabla \rangle^{-1} \partial_t A^{df}(0) \, .
\end{align}

Very recently the following theorem was claimed by Yuan \cite{Y}. We decided to give a more detailed proof of part 1 of it in the following.
\begin{theorem}
\label{Theorem2}
1. Let $s> \frac{3}{4}$. For $j=1,2,3$ let $a_j \in H^s({\mathbb R}^3)$ , $a_j' \in H^{s-1}({\mathbb R}^3)$,  $\phi_0 \in H^s({\mathbb R}^3)$ , $\phi_1 \in H^{s-1}({\mathbb R}^3)$ be given satisfying $\partial^j a_j' = - Im(\phi_0 \overline{\phi_1})$ . Then there exists $T>0$ such that (\ref{1.11}),(\ref{1.12}),(\ref{1.13}) with initial conditions 
$$\phi(0)=\phi_0 \, , \, \partial_t \phi(0) = \phi_1 \, , \, A^{df}(0) = a^{df} := (-\Delta)^{-1} curl \,curl \,a \, ,$$ 
$$ \partial_t A^{df}(0) = a'^{df} := (-\Delta)^{-1} curl\, curl \,a' \, , \, A^{cf}(0) = a^{cf}:= -(-\Delta)^{-1} \nabla\, div \,a $$
such that
\begin{equation}
\label{*}
a^{cf}=0 \, ,
\end{equation}
where $a:=(a_1,a_2,a_3)$ , $a':=(a_1',a_2',a_3')$, has a unique local solution such that
$$ \phi_{\pm} \in X_{\pm}^{s,\frac{3}{4}+}[0,T] \, , \, A^{df}_{\pm} \in X^{s,\frac{3}{4}+}[0,T] \ , \, A^{cf} \in X^{s+\frac{1}{4},\frac{1}{2}+}_{\tau=0}[0,T] \, . $$
This solution satisfies $\phi=\phi_+ + \phi_- \in C^0([0,T],H^s) \cap C^1([0,T],H^{s-1}) $ , $A= A^{df}_+ + A^{df}_- + A^{cf}\in C^0([0,T],H^s) \cap C^1([0,T],H^{s-1})$.
\\
2. If $ s \ge 1$ one obtains after performing a gauge transform
a solution of (\ref{1.1}),(\ref{1.2}),(\ref{1.16}) in temporal gauge $A_0=0$ with
$$ \phi \, , \, A \in C^0([0,T],H^s({\mathbb R}^3)) \cap C^1([0,T],H^{s-1}({\mathbb R}^3))  $$ without the assumption (\ref{*}), which exists globally, i.e. T can be chosen arbitrarily.
\end{theorem}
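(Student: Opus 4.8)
The plan is to prove part~1 by a fixed-point argument for the first-order system (\ref{1.11*})--(\ref{1.13*}) in the space
$$ \mathcal{X}_T := \prod_{\pm} X^{s,\frac34+}_{\pm}[0,T] \;\times\; \prod_{\pm} X^{s,\frac34+}_{\pm}[0,T] \;\times\; X^{s+\frac14,\frac12+}_{\tau=0}[0,T], $$
and then to deduce the rest by undoing the half-wave splitting, using the gauge transform (\ref{GT}), and invoking energy conservation. First I would record the standard linear estimates for $b>\frac12$: the homogeneous flow $e^{\mp it\langle\nabla\rangle}f$ lies in $X^{s,b}_{\pm}[0,T]$ with norm $\lesssim\|f\|_{H^s}$; the Duhamel operator for $(i\partial_t\pm\langle\nabla\rangle)u=F$ satisfies $\|u\|_{X^{s,b}_{\pm}[0,T]}\lesssim T^{\delta}\|F\|_{X^{s,b-1+\delta}_{\pm}[0,T]}$ for a small $\delta>0$; and the antiderivative solving $\partial_t u=F$, $u(0)=a^{cf}=0$ obeys $\|u\|_{X^{s+\frac14,\frac12+}_{\tau=0}[0,T]}\lesssim T^{\delta}\|F\|_{X^{s+\frac14,-\frac12+}_{\tau=0}[0,T]}$. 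Together with the nonlinearities of (\ref{1.11*})--(\ref{1.13*}) these define the iteration map $\Phi$; the small power of $T$, which turns $\Phi$ into a contraction, is available because the bilinear estimates below hold with a little slack in the exponents (cf.\ the Remark after Theorem~\ref{Theorem}).

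\textbf{Nonlinear estimates.} This is the heart of the matter. For the principal bilinear terms $A^{df}\!\cdot\!\nabla\phi$ in (\ref{1.13}) and $P(\phi\overline{\nabla\phi})$ in (\ref{1.12}) I would insert the null-form identities (\ref{1.13'}),(\ref{1.12'}) and bound the forms $Q_{ij}$ in wave--Sobolev spaces $X^{s',b'}_{|\tau|=|\xi|}$ by the d'Ancona--Foschi--Selberg estimates \cite{AFS}, passing freely between $X^{s,b}_{\pm}$ and $X^{s,b}_{|\tau|=|\xi|}$ through the embeddings recalled in the introduction. For the terms coupling the curl-free part to the wave variables --- $(\partial^jA^{cf}_j)\phi$ and $A^{cf}_j\partial^j\phi$ in (\ref{1.13}), and $\mathrm{Im}(\phi\overline{\partial_t\phi})$ in (\ref{1.11*}) after applying $(-\Delta)^{-1}\nabla$ and the antiderivative --- I would use Tao's hybrid estimates \cite{T1} for products of a wave--Sobolev function with an $X^{s,b}_{\tau=0}$ function (respectively for a product of two wave--Sobolev functions landing in $X^{s-\frac34,-\frac12+}_{\tau=0}$); the extra $\frac14$ derivative carried by $A^{cf}$ is exactly what these estimates require. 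The cubic pieces $A^jA_j\phi$ and $iA|\phi|^2$ are handled by Sobolev multiplication, with room to spare since $s>\frac34$ in $\mathbb{R}^3$. Collecting everything yields, for $0<T\le1$,
$$ \|\Phi(u)\|_{\mathcal{X}_T}\lesssim \|\,\text{data}\,\|_{H^s\times H^{s-1}} + T^{\delta}\big(\|u\|_{\mathcal{X}_T}^2+\|u\|_{\mathcal{X}_T}^3\big), $$
and a matching difference estimate, so $\Phi$ is a contraction on a small ball for $T$ sufficiently small; this gives the unique solution in $\mathcal{X}_T$. Uniqueness among all solutions in $\mathcal{X}_T$, not merely those in the ball, follows by subtracting two solutions with equal data and re-running the same estimates on a short time interval.

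\textbf{From the reformulated to the original system, and globalisation.} Since $\frac34+>\frac12$, the embeddings $X^{s,\frac34+}_{\pm}[0,T]\hookrightarrow C^0([0,T],H^s)$ and $X^{s+\frac14,\frac12+}_{\tau=0}[0,T]\hookrightarrow C^0([0,T],H^{s+\frac14})$ give $\phi=\phi_++\phi_-$ and $A=A^{df}_++A^{df}_-+A^{cf}$ in $C^0([0,T],H^s)$, and the relations $\partial_t\phi=i\langle\nabla\rangle(\phi_+-\phi_-)$, $\partial_tA^{df}=i\langle\nabla\rangle(A^{df}_+-A^{df}_-)$ together with (\ref{1.11*}) upgrade this to $C^1([0,T],H^{s-1})$; reversing the algebraic substitutions shows $(\phi,A)$ solves (\ref{1.1}),(\ref{1.2}) in temporal gauge, which is part~1. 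For part~2 with general $a^{cf}$, one first applies the time-independent gauge transform $\chi(x)=(-\Delta)^{-1}\mathrm{div}\,a(x)$ of (\ref{GT}): it preserves $A_0=0$ and the Gauss constraint and produces data with $a^{cf}=0$; here $s\ge1$ enters so that $\chi\in H^{s+1}$ and the multiplications $e^{\pm i\chi}\phi_0\in H^s$, $e^{\pm i\chi}\phi_1\in H^{s-1}$ are legitimate. One applies part~1 and then transforms the solution back by $e^{-i\chi}$ to obtain a solution attaining the original data (\ref{1.16}). Finally, since the existence time depends only on the $H^s\times H^{s-1}$ norm of the data, a standard continuation argument combined with conservation of the energy (\ref{1.4}) --- which, after a Gr\"onwall-type estimate controlling also $\|\phi\|_{L^2}$ and $\|A_j\phi\|_{L^2}$, bounds the $H^1\times L^2$ norm of the solution on every finite interval, hence (propagating the higher regularity through the local theory when $s>1$) the full $H^s\times H^{s-1}$ norm --- yields the global solution, $T$ being arbitrary.

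\textbf{Main obstacle.} The crux is the nonlinear step, and within it the terms that couple the elliptic variable $A^{cf}$ --- which enjoys no dispersive smoothing and lives only in an $X^{s,b}_{\tau=0}$ space --- to the wave variables: Tao's hybrid estimates are designed for exactly this situation, but must be applied with the correct derivative bookkeeping, the $+\frac14$ gain for $A^{cf}$ being essential. A secondary difficulty is extracting from the conserved energy an a priori bound strong enough to feed the continuation argument, since the energy controls only gauge-covariant quantities rather than $\nabla\phi$ and $A$ themselves.
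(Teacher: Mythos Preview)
Your overall strategy --- contraction in $X^{s,\frac34+}_\pm \times X^{s+\frac14,\frac12+}_{\tau=0}$, null forms plus \cite{AFS} for the wave--wave interactions, Tao's hybrid estimates for the wave--$A^{cf}$ couplings, and energy conservation together with the gauge transform (\ref{GT}) for part~2 --- matches the paper's exactly. The one place you underestimate the work is the estimate for $|\nabla|^{-1}(\phi\,\overline{\partial_t\phi})$ landing in $X^{s+\frac14,-\frac12+2\epsilon-}_{\tau=0}$: you attribute it to ``Tao's hybrid estimates \cite{T1}'', but the paper singles this out as the one estimate that does \emph{not} follow directly from \cite{T1}, precisely because of the time derivative on $\phi$. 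The proof of this estimate (labelled (\ref{N1}) in the paper) is in fact the bulk of the argument for part~1: it passes through two auxiliary claims, a frequency decomposition according to whether $||\tau_j|-|\xi_j||\gtrsim|\xi_j|$ or $||\tau_j|-|\xi_j||\ll|\xi_j|$, and a careful combination of Sobolev multiplication, Tao's $L^4_xL^2_t$ estimate (Lemma~\ref{Lemma2}), interpolated Strichartz bounds, and the product estimates of Proposition~\ref{Prop.2}. The other three nonlinear estimates are, as you say, routine adaptations of \cite{T1}; and the paper confirms your remark that the slack in the exponents (the $2\epsilon$ rather than $\epsilon$ on the output side of (\ref{N1})--(\ref{N4})) is exactly what furnishes the $T^\delta$ contraction factor for large data --- it is precisely this that fails for the additional terms in Yang--Mills and restricts \cite{T1} to small data there.
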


Fundamental for us are the following bilinear estimates in wave-Sobolev spaces which were proven by d'Ancona, Foschi and Selberg in the three-dimensional case $n=3$ in \cite{AFS} in a more general form which include many limit cases which we do not need.
\begin{prop}
\label{Prop.2}
Let $n=3$. The estimate
$$\|uv\|_{X_{|\tau|=|\xi|}^{-s_0,-b_0}} \lesssim \|u\|_{X^{s_1,b_1}_{|\tau|=|\xi|}} \|v\|_{X^{s_2,b_2}_{|\tau|=|\xi|}} $$ 
holds, provided the following conditions hold:
\begin{align*}
\nonumber
& b_0 + b_1 + b_2 > \frac{1}{2} \\
\nonumber
& b_0 + b_1 > 0 \\
\nonumber
& b_0 + b_2 > 0 \\
\nonumber
& b_1 + b_2 > 0 \\
\nonumber
&s_0+s_1+s_2 > 2 -(b_0+b_1+b_2) \\
\nonumber
&s_0+s_1+s_2 > \frac{3}{2} -\min(b_0+b_1,b_0+b_2,b_1+b_2) \\
\nonumber
&s_0+s_1+s_2 > 1 - \min(b_0,b_1,b_2) \\
\nonumber
&s_0+s_1+s_2 > 1 \\
 &(s_0 + b_0) +2s_1 + 2s_2 > \frac{3}{2} \\
\nonumber
&2s_0+(s_1+b_1)+2s_2 > \frac{3}{2} \\
\nonumber
&2s_0+2s_1+(s_2+b_2) > \frac{3}{2} \\
\nonumber
&s_1 + s_2 \ge \max(0,-b_0) \\
\nonumber
&s_0 + s_2 \ge \max(0,-b_1) \\
\nonumber
&s_0 + s_1 \ge \max(0,-b_2)   \, .
\end{align*}
\end{prop}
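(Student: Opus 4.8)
\emph{Proof idea.} The plan is to follow the standard dyadic reduction for bilinear wave--Sobolev estimates. First I would dualize: with $w$ the dual variable, the claimed bound is equivalent to the trilinear estimate $|\int_{\mathbb{R}^{1+3}} u\,v\,w\,dx\,dt| \lesssim \|u\|_{X^{s_1,b_1}_{|\tau|=|\xi|}}\|v\|_{X^{s_2,b_2}_{|\tau|=|\xi|}}\|w\|_{X^{s_0,b_0}_{|\tau|=|\xi|}}$. Passing to the space--time Fourier side, replacing each factor by the modulus of its transform, and writing $u=u_++u_-$, $v=v_++v_-$, $w=w_++w_-$ according to the sign of the time frequency (so that on each piece the weight $\langle|\tau_i|-|\xi_i|\rangle$ becomes a half--wave weight $\langle\tau_i\mp|\xi_i|\rangle$), the problem becomes a family of convolution estimates on $\mathbb{R}^{1+3}$ with the three frequency--time vectors constrained by $\xi_0+\xi_1+\xi_2=0$ and $\tau_0+\tau_1+\tau_2=0$.

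Next I would Littlewood--Paley decompose, localizing the spatial frequencies to $|\xi_i|\sim N_i$ and the modulations to $\langle|\tau_i|-|\xi_i|\rangle\sim L_i$, all dyadic. On each block the estimate reduces to bounding a constant $C(N_0,N_1,N_2;L_0,L_1,L_2)$ by $N_0^{s_0}N_1^{-s_1}N_2^{-s_2}L_0^{b_0}L_1^{-b_1}L_2^{-b_2}$ (with the role of numerator and denominator swapped for the parameters whose exponent is negative), and then summing over all dyadic parameters. The two analytic inputs are: (i) the elementary algebraic ``dispersive'' lower bound, namely that on the intersection of the three thickened cones one always has $\max(L_0,L_1,L_2)\gtrsim$ a quantity controlled by the two smaller frequencies and the angle of transversality --- roughly $\frac{N_1N_2}{N_1+N_2}\theta^2$ when the time frequencies agree in sign, and $\gtrsim\max(N_0,N_1,N_2)$ when they do not; and (ii) the $L^2_{tx}$ bilinear estimates for products of two modulation-- and frequency--localized waves, which bound the relevant $L^2$ mass by the geometry of the intersection of two thickened spheres, together with the $L^4_{tx}$ Strichartz estimate for the wave equation to treat the low--modulation regime. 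Combining (i) and (ii) yields an explicit bound for $C$ in each of the finitely many geometric regimes: high--high interaction with low or comparable output, low--high, the diagonal, and the subcases according to which of $L_0,L_1,L_2$ is dominant.

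The last step is the dyadic summation, and this is where each of the fourteen inequalities in the statement enters: each is precisely the condition making one corner of the sum converge. The conditions on $b_0+b_1+b_2$, on the pairwise sums $b_i+b_j$, and on $\min(b_0,b_1,b_2)$ come from summing the modulation indices $L_i$ after using the ``largest modulation'' dichotomy; the conditions on $s_0+s_1+s_2$ come from summing the frequency indices in the various resonant regimes (e.g. $s_0+s_1+s_2>2-(b_0+b_1+b_2)$ is the worst high--high--to--high case once the full dispersive gain has been spent, while $s_0+s_1+s_2>1$ and the three mixed conditions $(s_0+b_0)+2s_1+2s_2>\tfrac{3}{2}$, $2s_0+(s_1+b_1)+2s_2>\tfrac{3}{2}$, $2s_0+2s_1+(s_2+b_2)>\tfrac{3}{2}$ govern the low--output and concentration configurations); and the three conditions $s_i+s_j\ge\max(0,-b_k)$ are the trivial requirement that the relevant piece make sense when a modulation exponent is negative. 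I expect the main obstacle to be exactly this bookkeeping: organizing the case distinction so every configuration is covered once and verifying that no single regime needs a strictly stronger hypothesis than those listed, so that the proposition can be used as a sharp black box in the later arguments. The only genuinely nontrivial analytic ingredient is the bilinear $L^2$ estimate in the near--parallel regime, where one must exploit the curvature of the light cone; everything else is Strichartz combined with careful summation.
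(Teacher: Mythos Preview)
The paper does not prove this proposition at all: it is quoted as a black box from d'Ancona, Foschi and Selberg \cite{AFS} (see the sentence immediately preceding the statement, ``\ldots which were proven by d'Ancona, Foschi and Selberg in the three-dimensional case $n=3$ in \cite{AFS}\ldots''). So there is nothing to compare your argument against in the paper itself.

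That said, your outline is a fair high-level description of the strategy actually carried out in \cite{AFS}: dualize to a trilinear integral, split into sign combinations, dyadically localize in frequency and modulation, use the algebraic lower bound on the maximal modulation together with the bilinear $L^2$ estimates encoding the curvature of the cone, and then sum. What you have written is only a sketch, though, and the real work in \cite{AFS} is precisely the exhaustive case analysis you allude to; verifying that the fourteen conditions are exactly what is needed (and in particular handling the borderline cases where some inequalities are nonstrict) is long and delicate. If your intent was to reproduce the paper's treatment of this proposition, the correct answer is simply to cite \cite{AFS}; if your intent was to supply an independent proof, be aware that turning your sketch into a complete argument is a substantial undertaking and not something the present paper attempts.
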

We also need an immediate consequence of Strichartz' estimate for the homogeneous wave equation.
\begin{lemma}
\label{Lemma1}
Assume $2  < q \le \infty$ , $2 \le r < \infty$ with $ \frac{1}{2} \le \frac{1}{q} + \frac{1}{r} \le 1$. Then
\begin{equation}
\nonumber
\|u\|_{L^q_t L^r_x} \lesssim \| |\nabla|^{1-\frac{2}{r}} u\|_{X^{0,1-(\frac{1}{q}+\frac{1}{r})+}_{|\tau|=|\xi|}} \, . 
\end{equation}
\end{lemma}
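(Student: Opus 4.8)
The plan is to read off the estimate from the classical Strichartz inequality for the homogeneous wave equation via the transfer principle, and to cover the whole admissible range of $(q,r)$ by interpolating against the Plancherel identity $\|u\|_{L^2_{t,x}}=\|u\|_{X^{0,0}_{|\tau|=|\xi|}}$.

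First I would fix a sharp wave-admissible pair $(\widetilde q,\widetilde r)$ for $n=3$, i.e. $2\le\widetilde q\le\infty$, $2\le\widetilde r<\infty$, $\frac{1}{\widetilde q}+\frac{1}{\widetilde r}=\frac12$ and $(\widetilde q,\widetilde r)\neq(2,\infty)$, and recall the classical Strichartz estimate $\|e^{\pm it|\nabla|}f\|_{L^{\widetilde q}_tL^{\widetilde r}_x}\lesssim\||\nabla|^{\gamma}f\|_{L^2_x}$ with scaling exponent $\gamma=\frac32-\frac{3}{\widetilde r}-\frac{1}{\widetilde q}=1-\frac{2}{\widetilde r}$. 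Since $\frac12+>\frac12$, the usual transfer principle turns this (for both signs) into
\[
\|u\|_{L^{\widetilde q}_tL^{\widetilde r}_x}\ \lesssim\ \big\|\,|\nabla|^{\,1-\frac{2}{\widetilde r}}\,u\,\big\|_{X^{0,\frac12+}_{|\tau|=|\xi|}} ;
\]
concretely, one decomposes $u=u_++u_-$ with $u_\pm$ supported in $\{\pm\tau>0\}$, writes $u_\pm$ as a superposition $\int_{\mathbb R}e^{it\sigma}\,e^{\pm it|\nabla|}g_\sigma\,d\sigma$ over modulations $\sigma$ (so that $\|\,|\nabla|^{\gamma}g_\sigma\|_{L^2_x}$ is controlled by the wave-Sobolev norm after undoing the substitution $\tau=\pm|\xi|+\sigma$), pulls the $\sigma$-integral out of the space-time norm by Minkowski's inequality (legitimate because $L^{\widetilde q}_tL^{\widetilde r}_x$ is invariant under $u\mapsto e^{it\sigma}u$), applies the free estimate, and finishes with Cauchy--Schwarz in $\sigma$ against $\langle\sigma\rangle^{-1-}$. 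The second, trivial, ingredient is $\|u\|_{L^2_{t,x}}=\|\widehat u\|_{L^2_{\tau\xi}}\le\|u\|_{X^{0,0+}_{|\tau|=|\xi|}}$, which is the asserted inequality at $(q,r)=(2,2)$, where $1-\frac2r=0$.

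Given a pair $(q,r)$ as in the statement I would then set $\theta:=2\big(1-\frac1q-\frac1r\big)$, which lies in $(0,1]$ since $q>2$ and $r\ge2$, and determine $(\widetilde q,\widetilde r)$ from $\frac1q=\frac{\theta}{\widetilde q}+\frac{1-\theta}{2}$ and $\frac1r=\frac{\theta}{\widetilde r}+\frac{1-\theta}{2}$. A direct computation gives $\frac1{\widetilde q}+\frac1{\widetilde r}=\frac12$ and shows, using $q>2$ and $r\ge2$, that $(\widetilde q,\widetilde r)$ is a sharp wave-admissible pair with $\widetilde r<\infty$ and $(\widetilde q,\widetilde r)\neq(2,\infty)$, so that the transferred estimate above applies to it. Complex interpolation with parameter $\theta$ between that estimate and the Plancherel bound --- the right-hand sides being weighted $L^2_{\tau\xi}$-norms, so that the interpolant carries the product weight --- yields
\[
\|u\|_{L^q_tL^r_x}\ \lesssim\ \big\|\,|\nabla|^{\,\theta(1-\frac{2}{\widetilde r})}\,u\,\big\|_{X^{0,\frac{\theta}{2}+}_{|\tau|=|\xi|}},
\]
and I would conclude by the elementary identities $\theta\big(1-\frac{2}{\widetilde r}\big)=1-\frac2r$ and $\frac{\theta}{2}=1-\big(\frac1q+\frac1r\big)$, both immediate from the defining relations.

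The one step that is not purely mechanical is the occurrence of modulation exponents below $\frac12$: when $\frac1q+\frac1r>\frac12$ the target exponent $1-(\frac1q+\frac1r)+$ is smaller than $\frac12$, and the bare transfer principle --- which already spends a full $b>\frac12$ in the Cauchy--Schwarz step over the modulation variable --- cannot produce it, so one really must interpolate against the $X^{0,0}$-endpoint coming from Plancherel. A secondary point to be careful about is that the interpolated pair $(\widetilde q,\widetilde r)$ must avoid the excluded Strichartz endpoint $(2,\infty)$; this is exactly what the strict inequality $q>2$ in the hypotheses ensures.
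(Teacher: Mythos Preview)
Your argument is correct and is essentially the same as the paper's: the paper too combines the transferred sharp Strichartz estimate $\|u\|_{L_t^{2/(1-\epsilon)} L_x^{2/\epsilon}} \lesssim \| |\nabla|^{1-\epsilon} u\|_{X^{0,\frac{1}{2}+}_{|\tau|=|\xi|}}$ (which is your estimate on the line $\frac{1}{\widetilde q}+\frac{1}{\widetilde r}=\frac12$, parametrized by $\epsilon=\frac{2}{\widetilde r}$) with the Plancherel identity $\|u\|_{L^2_{t,x}}=\|u\|_{X^{0,0}_{|\tau|=|\xi|}}$ via interpolation. Your writeup simply spells out the interpolation parameter and verifies the admissibility of the endpoint pair in more detail than the paper does.
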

\begin{proof}[Proof: (\cite{ST}, Lemma 2.1)]
Interpolate the Strichartz' type estimate (combine \cite{P1}, Theorem 1 with the transfer principle)
$$ \|u\|_{L_t^{\frac{2}{1-\epsilon}} L_x^{\frac{2}{\epsilon}}} \lesssim \| |\nabla|^{1-\epsilon} u \|_{X^{0,\frac{1}{2}+}_{|\tau|=|\xi|}} $$
for $0 < \epsilon \le 1$ with the trivial identity $\|u\|_{L^2_t L^2_x} = \|u\|_{X^{0,0}_{|\tau|=|\xi|}}$.
\end{proof}
A very powerful nontrivial variant with $L^r_x L^q_t$-norms rather than $L^q_t L^r_x$-norms was detected by Tao (cf. \cite{T1}, Prop. 4.1) (see also \cite{KMBT}, appendix by D. Tataru): 
\begin{lemma}
\label{Lemma2}
The following estimate holds :
\begin{equation}
\label{Tao}
 \|u\|_{ L^4_x L^2_t} \lesssim \|u\|_{X^{\frac{1}{4},\frac{1}{2}+}_{|\tau|=|\xi|}} \, . 
 \end{equation}
\end{lemma}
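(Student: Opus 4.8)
\emph{Proof proposal.} The plan is to peel off the modulation weight to reduce \eqref{Tao} to a fixed-time (free-wave) estimate, and then to prove that estimate by Plancherel in $t$ together with the Stein--Tomas adjoint restriction estimate for the sphere in ${\mathbb R}^3$.

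\emph{Step 1 (reduction to free waves).} Since $\langle|\tau|-|\xi|\rangle$ is comparable to $\langle\tau\mp|\xi|\rangle$ on the half-spaces $\pm\tau>0$, it suffices to treat $u$ with $\widehat u$ supported in $\{\tau>0\}$, the other piece being handled by the reflection $t\mapsto-t$. Writing $\tau=|\xi|+\rho$ and $\widehat{g_\rho}(\xi):=\widehat u(|\xi|+\rho,\xi)$ one has $u(t,x)=\int_{\mathbb R}e^{it\rho}(e^{it|\nabla|}g_\rho)(x)\,d\rho$. Since multiplication by $e^{it\rho}$ leaves the $L^2_t$-norm at a fixed $x$ unchanged, Minkowski's inequality gives $\|u\|_{L^4_xL^2_t}\le\int_{\mathbb R}\|e^{it|\nabla|}g_\rho\|_{L^4_xL^2_t}\,d\rho$. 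Granting the free-wave bound
\begin{equation}
\label{eq:freewave}
\|e^{\pm it|\nabla|}g\|_{L^4_xL^2_t}\lesssim\||\nabla|^{\frac14}g\|_{L^2},
\end{equation}
we then apply Cauchy--Schwarz in $\rho$ against the integrable weight $\langle\rho\rangle^{-2b}$ (recall $b=\tfrac12+$, so $2b>1$) and change variables $(\rho,\xi)\mapsto(\tau,\xi)$ to obtain $\|u\|_{L^4_xL^2_t}\lesssim\|u\|_{X^{\frac14,\frac12+}_{|\tau|=|\xi|}}$. This is precisely where $b>\tfrac12$ is needed.

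\emph{Step 2 (the free-wave estimate).} In polar coordinates $e^{it|\nabla|}g(x)=\int_0^\infty e^{it\lambda}(E_\lambda g)(x)\,d\lambda$, where $(E_\lambda g)(x)=\int_{|\xi|=\lambda}e^{ix\xi}\widehat g(\xi)\,d\sigma_\lambda(\xi)$ is the extension operator for the sphere of radius $\lambda$. Plancherel in $t$ gives $\|e^{it|\nabla|}g(x,\cdot)\|_{L^2_t}^2\simeq\int_0^\infty|(E_\lambda g)(x)|^2\,d\lambda$, hence
$$\|e^{it|\nabla|}g\|_{L^4_xL^2_t}^2=\Big\|\int_0^\infty|E_\lambda g|^2\,d\lambda\Big\|_{L^2_x}\le\int_0^\infty\|E_\lambda g\|_{L^4_x}^2\,d\lambda.$$
By the Stein--Tomas estimate $\|\widehat{fd\sigma}\|_{L^4({\mathbb R}^3)}\lesssim\|f\|_{L^2(S^2)}$ (the endpoint $p=\tfrac{2(n+1)}{n-1}$ for $n=3$), rescaled to radius $\lambda$, one gets $\|E_\lambda g\|_{L^4_x}^2\lesssim\lambda^{5/2}\int_{S^2}|\widehat g(\lambda\omega)|^2\,d\sigma(\omega)$; integrating in $\lambda$ and returning to Cartesian coordinates yields $\int_{{\mathbb R}^3}|\xi|^{1/2}|\widehat g(\xi)|^2\,d\xi=\||\nabla|^{\frac14}g\|_{L^2}^2$, which is \eqref{eq:freewave}. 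A routine density argument justifies the manipulations for general $g$.

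\emph{Main obstacle.} The substance lies in Step 2, and in particular in combining Stein--Tomas with the correct $\lambda$-scaling and the Minkowski step $\|\int|E_\lambda g|^2\,d\lambda\|_{L^2_x}\le\int\|E_\lambda g\|_{L^4_x}^2\,d\lambda$; one must verify that this step is lossless at the level of scaling, which explains why the exponent $\tfrac14$ is sharp and why the estimate just fails at $b=\tfrac12$. The remaining ingredients — the decomposition in $\rho$, Minkowski, and Cauchy--Schwarz — are routine. Alternatively, one may simply invoke \cite{T1}, Prop.~4.1 (or the appendix of \cite{KMBT} by D.~Tataru), which is the course the present paper takes.
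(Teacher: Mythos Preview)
Your argument is correct. The paper, however, gives no proof of this lemma at all: it merely attributes the estimate to Tao \cite{T1}, Prop.~4.1 (with a parallel reference to Tataru's appendix in \cite{KMBT}) and moves on. You have supplied a self-contained proof where the paper outsources the result, and you already note this in your final sentence.

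Your route --- the standard transfer-principle foliation $\tau=|\xi|+\rho$ with Cauchy--Schwarz in $\rho$ to reduce to the free-wave bound, then Plancherel in $t$, Minkowski, and the endpoint Stein--Tomas adjoint restriction estimate for $S^2\subset{\mathbb R}^3$ --- is clean, and the scaling in Step~2 checks out: $\|E_\lambda g\|_{L^4_x}^2\lesssim\lambda^{5/2}\int_{S^2}|\widehat g(\lambda\omega)|^2\,d\sigma$ integrates to exactly $\int|\xi|^{1/2}|\widehat g(\xi)|^2\,d\xi$. Since $|\xi|^{1/2}\le\langle\xi\rangle^{1/2}$, this yields the inhomogeneous norm stated in the lemma (and in fact the slightly stronger homogeneous version). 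What your argument buys over a bare citation is transparency about why the regularity exponent is $\tfrac14$ (it is forced by the $L^4$ Stein--Tomas exponent and scaling) and why $b>\tfrac12$ is needed (integrability of $\langle\rho\rangle^{-2b}$ in the transfer step).
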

\begin{proof}[Proof of Theorem \ref{Theorem2}]
For part 2. which relies on the energy conservation law we refer to Yuan \cite{Y} and
remark that although it was formulated by Yuan only for $s=1$ it in fact holds for any $s\ge 1$, because one notices that the special gauge transform (\ref{GT}) preserves also higher regularity.\\
Proof of part 1: 
Using (\ref{1.13'}),(\ref{1.12'}),(\ref{1.14}),(\ref{1.11*})-(\ref{1.15*}) by standard arguments the local existence and uniqueness proof for large data is reduced to the following estimates:
\begin{equation}
\label{N1}
\| |\nabla|^{-1} (\phi_1 \partial_t \phi_2)\|_{X^{s+\frac{1}{4},-\frac{1}{2}+2\epsilon-}_{\tau=0}} \lesssim \|\phi_1\|_{X^{s,\frac{3}{4}+\epsilon}_{|\tau|=|\xi|}} \|\phi_2\|_{X^{s,\frac{3}{4}+\epsilon}_{|\tau|=|\xi|}} \, ,
\end{equation}
\begin{align}
\label{N2}
&\|Q_{ij}(|\nabla|^{-1}\phi_1,\phi_2)\|_{X^{s-1,-\frac{1}{4}+2\epsilon}_{\tau|=|\xi|}} + \||\nabla|^{-1}Q_{ij}(\phi_1,\phi_2)\|_{X^{s-1,-\frac{1}{4}+2\epsilon}_{\tau|=|\xi|}} \\
\nonumber
& \hspace{15em} \lesssim \|\phi_1\|_{X^{s,\frac{3}{4}+\epsilon}_{|\tau|=|\xi|}} \|\phi_2\|_{X^{s,\frac{3}{4}+\epsilon}_{|\tau|=|\xi|}} \, ,
\end{align}
\begin{equation}
\label{N3}
\| \nabla A \phi \|_{X^{s-1,-\frac{1}{4}+2\epsilon}_{\tau|=|\xi|}} +
\| A \nabla \phi \|_{X^{s-1,-\frac{1}{4}+2\epsilon}_{\tau|=|\xi|}} \lesssim \|A\|_{X^{s+\frac{1}{4},\frac{1}{2}+\epsilon}_{\tau =0}}  \|\phi\|_{X^{s,\frac{3}{4}+\epsilon}_{|\tau|=|\xi|}} \, ,
\end{equation}
\begin{equation}
\label{N4}
\| A_1 A_2 A_3 \|_{X^{s-1,-\frac{1}{4}+2\epsilon}_{\tau|=|\xi|}} \lesssim \prod_{i=1}^3 \min(\|A_i\|_{X^{s,\frac{3}{4}+\epsilon}_{|\tau|=|\xi|}},\|A_i\|_{X^{s+\frac{1}{4},\frac{1}{2}+\epsilon}_{\tau =0}} ) \, .
\end{equation}
We especially remark that in order to obtain a large data result we need on the left hand side $X^{s,b}$-spaces with $b=-\frac{1}{2}+2\epsilon-$ and $b=-\frac{1}{4}+2\epsilon$ instead of $b=-\frac{1}{2}+\epsilon$ and $b=-\frac{1}{4}+\epsilon$, respectively. This can not be achieved for additional estimates which are needed in the Yang-Mills case. This is why one only obtains a small data result by this method in the paper by Tao \cite{T1}  (cf. especially the footnote in the introduction of his paper), which nevertheless is fundamental for our proof. It is simple to adapt Tao's proof in order to obtain (\ref{N2}),(\ref{N3}) and (\ref{N4}). Therefore we concentrate on the proof of (\ref{N1}), where the time derivative requires a modification of it. Similarly as Tao we start with\\
{\bf Claim 1:} 
\begin{equation*}
\| |\nabla|^{-1} (\phi_1 \partial_t \phi_2)\|_{X^{s+\frac{1}{4},-\frac{1}{2}+2\epsilon-}_{\tau=0}} \lesssim \|\phi_1\|_{X^{s+\frac{1}{4}-\epsilon,\frac{1}{2}+\epsilon}_{\tau=0}} \|\phi_2\|_{X^{s+\frac{1}{4}-\epsilon,\frac{1}{2}+\epsilon}_{\tau=0}}  \, .
\end{equation*}
As usual the regularity of $|\nabla|^{-1}$ is harmless in three dimensions (\cite{T}, Cor. 8.2) and it can be replaced by $\langle \nabla \rangle^{-1}$. Taking care of the time derivative we reduce to
\begin{align*}
\big|\int \int u_1 u_2 u_3 dx dt\big| \lesssim \|u_1\|_{X^{s+\frac{1}{4}-\epsilon,\frac{1}{2}+\epsilon}_{\tau =0}}
\|u_2\|_{X^{s+\frac{1}{4}-\epsilon,-\frac{1}{2}+\epsilon}_{\tau =0}}
\|u_3\|_{X^{\frac{3}{4}-s,\frac{1}{2}-2\epsilon+}_{\tau =0}} \, ,
\end{align*}
which follows from Sobolev's multiplication rule.\\
{\bf Claim 2:} 
\begin{align*}
&\| |\nabla|^{-1} (\phi_1 \partial_t \phi_2)\|_{X^{s+\frac{1}{4},-\frac{1}{2}+2\epsilon-\delta}_{\tau=0}} + \| |\nabla|^{-1} (\phi_2 \partial_t \phi_1)\|_{X^{s+\frac{1}{4},-\frac{1}{2}+2\epsilon-\delta}_{\tau=0}} \\
& \hspace{20em}\lesssim \|\phi_1\|_{X^{s,\frac{3}{4}+\epsilon}_{|\tau|=|\xi|}} \|\phi_2\|_{X^{s+\frac{1}{4}-\epsilon,\frac{1}{2}+2\epsilon-\delta}_{\tau=0}} 
\end{align*}
for $0<\delta \ll \epsilon$. \\
a. If $\widehat{\phi}$ is supported in  $ ||\tau|-|\xi|| \gtrsim |\xi| $ we remark that
$$ \|\phi\|_{X^{s+\frac{1}{4}-\epsilon,\frac{1}{2}+\epsilon}_{\tau=0}} \lesssim \|\phi\|_{X^{s,\frac{3}{4}}_{|\tau|=|\xi|}} \,, $$
so that claim 2 follows from claim 1.\\
b. It remains to show
$$ \big|\int\int (uv_t w + uvw_t) dxdt \big| \lesssim 
\|u\|_{X^{\frac{3}{4}-\epsilon,\frac{1}{2}-2\epsilon+\delta}_{\tau =0}}
\|w\|_{X^{s,\frac{3}{4}+\epsilon}_{|\tau| =|\xi|}}
\|v\|_{X^{s+\frac{1}{4}-\epsilon,\frac{1}{2}+2\epsilon-\delta}_{\tau =0}} \, $$
whenever $\widehat w$ is supported in $||\tau|-|\xi|| \ll |\xi|$.
This is equivalent to
$$ \int_* m(\xi_1,\xi_2,\xi_3,\tau_1,\tau_2,\tau_3) \prod_{i=1}^3 \widehat{u}_i(\xi_i,\tau_i) d\xi d\tau \lesssim \prod_{i=1}^3 \|u_i\|_{L^2_{xt}} \, $$
where $d\xi = d\xi_1 d\xi_2 d\xi_3$ , $d\tau = d\tau_1 d\tau_2 d\tau_3$ and * denotes integration over $\sum_{i=1}^3 \xi_i = \sum_{i=1}^3 \tau_i = 0$. The Fourier transforms are nonnegative without loss of generality. Here
$$ m= \frac{(|\tau_2|+|\tau_3|) \chi_{||\tau_3|-|\xi_3|| \ll |\xi_3|}}{\langle \xi_1 \rangle^{\frac{3}{4}-s} \langle \tau_1 \rangle^{\frac{1}{2}-2\epsilon+\delta} \langle \xi_2 \rangle^{s+\frac{1}{4}-\epsilon} \langle \tau_2 \rangle^{\frac{1}{2}+2\epsilon-\delta} \langle \xi_3 \rangle^s \langle |\tau_3|-|\xi_3|\rangle^{\frac{3}{4}+\epsilon}} \, . $$
Since $\langle \tau_3 \rangle \sim \langle \xi_3 \rangle$ and $\tau_1+\tau_2+\tau_3=0$ we have 
\begin{equation}
\label{N4'}
|\tau_2| + |\tau_3| \lesssim \langle \tau_1 \rangle^{\frac{1}{2}-2\epsilon+\delta} \langle \tau_2 \rangle^{\frac{1}{2}+2\epsilon-\delta} +\langle \tau_1 \rangle^{\frac{1}{2}-2\epsilon+\delta} \langle \xi_3 \rangle^{\frac{1}{2}+2\epsilon-\delta} +\langle \tau_2 \rangle^{\frac{1}{2}+2\epsilon-\delta} \langle \xi_3 \rangle^{\frac{1}{2}-2\epsilon+\delta} , 
\end{equation}
so that concerning the first term on the right hand side of (\ref{N4'}) we have to show
$$\big|\int\int uvw dx dt\big| \lesssim \|u\|_{X^{\frac{3}{4}-\epsilon,0}_{\tau=0}} |v\|_{X^{s+\frac{1}{4}-\epsilon,0}_{\tau=0}} |w\|_{X^{s,\frac{3}{4}+\epsilon}_{|\tau|=|\xi|}} \ , $$
which easily follows from Sobolev's multiplication rule.\\
Concerning the second term on the right hand side of (\ref{N4'}) we use $\langle \xi_1 \rangle^{s-\frac{3}{4}} \lesssim \langle \xi_2 \rangle^{s-\frac{3}{4}} + \langle \xi_3 \rangle^{s-\frac{3}{4}}$, so that we reduce to
\begin{align*}
&\big|\int\int uvw dx dt\big|  \lesssim \|u\|_{L^2_x L^2_t}  \|v\|_{L^{\frac{4}{1-4(2\epsilon - \delta)}}_x L^{\infty}_t} \|w\|_{L^{\frac{4}{1+4(2\epsilon-\delta)}}_x L^2_t} \\
& \lesssim\|u\|_{X^{0,0}_{\tau=0}} (\|v\|_{X^{1-\epsilon,\frac{1}{2}+2\epsilon-\delta}_{\tau=0}} \|w\|_{X^{s-\frac{1}{2}-2\epsilon+\delta,\frac{3}{4}+\epsilon}_{|\tau|=|\xi|}} 
+ \|v\|_{X^{s+\frac{1}{4}-\epsilon,\frac{1}{2}+2\epsilon-\delta}_{\tau=0}} \|w\|_{X^{\frac{1}{4}-2\epsilon+\delta,\frac{3}{4}+\epsilon}_{|\tau|=|\xi|}} )\,, 
\end{align*}
where we used Sobolev and interpolated between (\ref{Tao}) and the trivial identity $\|w\|_{L^2_x L^2_t} = \|w\|_{X^{0,0}_{|\tau|=|\xi|}}$ to obtain
$$\|w\|_{L^{\frac{4}{1+4(2\epsilon-\delta)}}_x L^2_t} \lesssim  \|w\|_{X^{\frac{1}{4}-2\epsilon+\delta,\frac{1}{2}+}_{|\tau|=|\xi|}}\, .$$
Concerning the last term on the right hand side of (\ref{N4'}) we can similarly reduce to
\begin{align*}
&\big|\int\int uvw dx dt\big| 
 \lesssim \|u\|_{L^2_x L^{\frac{1}{2\epsilon-\delta}}_t}  
\|v\|_{L^4_x L^2_t} \|w\|_{L^4_x L^{\frac{2}{1-2(2\epsilon-\delta)}}_t} \\
& \lesssim \|u\|_{X^{0,\frac{1}{2}-2\epsilon+\delta}_{\tau=0}} 
(\|v\|_{X^{1-\epsilon,0}_{\tau=0}}
\|w\|_{X^{s-\frac{1}{2}+2\epsilon-\delta,\frac{3}{4}+\epsilon}_{|\tau|=|\xi|}} 
+ \|v\|_{X^{s+\frac{1}{4}-\epsilon,0}_{\tau=0}} \|w\|_{X^{\frac{1}{4}+2\epsilon-\delta,\frac{3}{4}+\epsilon}_{|\tau|=|\xi|}} )\,, 
\end{align*}
where we interpolated between (\ref{Tao}) and Strichartz' estimate $\|u\|_{L^4_x L^4_t} \lesssim \|u\|_{X^{\frac{1}{2},\frac{1}{2}+}_{|\tau|0|\xi|}}$, which gives
$$ \|u\|_{L^4_x L^{\frac{2}{1-2(2\epsilon-\delta)}}_t} \lesssim \|u\|_{X^{\frac{1}{4}+2\epsilon-\delta,\frac{1}{2}+}_{|\tau|=|\xi|}} \, .$$
Claim 2 is now proven.

We now come to the proof of (\ref{N1}). \\
If $\widehat{\phi}$ is supported in $||\tau|-|\xi|| \gtrsim |\xi|$ we obtain
$$\|\phi\|_{X^{s+\frac{1}{4}-\epsilon,\frac{1}{2}+2\epsilon-\delta}_{\tau =0}}
\lesssim \|\phi\|_{X^{s,\frac{3}{4}+\epsilon - \delta}_{|\tau|=|\xi|}} \, . $$
which implies that (\ref{N1}) follows from claim 2, if $\widehat{\phi}_1$ or $\widehat{\phi}_2$ have this support property. So we may assume that both functions are supported in $||\tau|-|\xi|| \ll |\xi|$. This means that it suffices to show
$$ \int_* m(\xi_1,\xi_2,\xi_3,\tau_1,\tau_2,\tau_3) \prod_{i=1}^3 \widehat{u}_i(\xi_i,\tau_i) d\xi d\tau \lesssim \prod_{i=1}^3 \|u_i\|_{L^2_{xt}} \, , $$
where
$$m= \frac{|\tau_3|\chi_{||\tau_2|-|\xi_2|| \ll |\xi_2|} \chi_{||\tau_3|-|\xi_3|| \ll |\xi_3|}}{\langle \xi_1 \rangle^{\frac{3}{4}-s} \langle \tau_1 \rangle^{\frac{1}{2}-2\epsilon} \langle \xi_2 \rangle^s \langle |\tau_2|-|\xi_2| \rangle^{\frac{3}{4}+\epsilon} \langle \xi_3 \rangle^s \langle |\tau_3|-|\xi_3|\rangle^{\frac{3}{4}+\epsilon}} \, . $$
Since $\langle \tau_3 \rangle \sim \langle \xi_3 \rangle$ , $\langle \tau_2 \rangle \sim \langle \xi_2 \rangle$ and $\tau_1+\tau_2+\tau_3=0$ we have 
\begin{equation}
|\tau_3| \lesssim \langle \tau_1 \rangle^{\frac{1}{2}-2\epsilon} \langle \xi_3 \rangle^{\frac{1}{2}+2\epsilon} +\langle \xi_2 \rangle^{\frac{1}{2}-2\epsilon} \langle \xi_3 \rangle^{\frac{1}{2}+2\epsilon} , 
\end{equation}
The first term on the right hand side is treated by Prop. \ref{Prop.2} which gives
$$\big|\int \int uvw dx dt\big| \lesssim \|u\|_{X^{\frac{3}{4}-s,0}_{\tau=0}} \|v\|_{X^{s,\frac{3}{4}+\epsilon}_{|\tau|=|\xi|}} \|w\|_{X^{s-\frac{1}{2}-2\epsilon,\frac{3}{4}+\epsilon}_{|\tau|=|\xi|}} $$
for $ s > \frac{3}{4}$ and $\epsilon >0$ sufficiently small. In order to treat the second term on the right hand side we use $\langle \xi_1 \rangle^{s-\frac{3}{4}} \lesssim \langle \xi_2 \rangle^{s-\frac{3}{4}} + \langle \xi_3 \rangle^{s-\frac{3}{4}}$. We have to show
\begin{align}
\label{N5}
&\big|\int \int uvw dx dt\big|\\
\nonumber
 & \hspace{2em}\lesssim \|u\|_{X^{0,\frac{1}{2}-2\epsilon}_{\tau=0}} (\|v\|_{X^{\frac{1}{4}+2\epsilon,\frac{3}{4}+\epsilon}_{|\tau|=|\xi|}} \|w\|_{^{s-\frac{1}{2}-2\epsilon,\frac{3}{4}+\epsilon}_{|\tau|=|\xi|}} 
 + \|v\|_{X^{s-\frac{1}{2}+2\epsilon,\frac{3}{4}+\epsilon}_{|\tau|=|\xi|}} \|w\|_{^{\frac{1}{4}-2\epsilon,\frac{3}{4}+\epsilon}_{|\tau|=|\xi|}}) \, .
\end{align}
This follows from
\begin{align*}
&\big|\int \int uvw dx dt\big|\\
& \lesssim \|u\|_{L^2_x L^{\frac{1}{2\epsilon}}_t} \|v\|_{L^4_x L^2_t} \|w\|_{L^4_x L^{\frac{2}{1-4\epsilon}}_t} + \|u\|_{L^2_x L^{\frac{1}{7\epsilon}}_t} \|v\|_{L^{\frac{4}{1-12\epsilon}}_x L^{\frac{2}{1-12\epsilon}}_t} \|w\|_{L^{\frac{4}{1+12\epsilon}}_x L^{\frac{2}{1-2\epsilon}}_t} \,,
\end{align*}
which we now prove.
By Sobolev we namely have
$$ \|u\|_{L^2_x L^{\frac{1}{7\epsilon}}_t} + \|u\|_{L^2_x L^{\frac{1}{2\epsilon}}_t}
\lesssim \|u\|_{X^{0,\frac{1}{2}-2\epsilon}_{\tau =0}}$$
and by (\ref{Tao}) $$\|v\|_{ L^4_x L^2_t} \lesssim \|v\|_{X^{\frac{1}{4},\frac{1}{2}+}_{|\tau|=|\xi|}}\, .$$
We obtain by interpolation between (\ref{Tao}) and Strichartz' estimate $\|w\|_{L^4_x L^4_t} \lesssim \|w\|_{X^{\frac{1}{2},\frac{1}{2}+}_{|\tau|=|\xi|}}$ with interpolation parameter $\theta = 1-8\epsilon $ the estimate
$$\|w\|_{L^4_x L^{\frac{2}{1-4\epsilon}}_t} \lesssim \|w\|_{X^{\frac{1}{4}+2\epsilon,\frac{1}{2}+}_{|\tau|=|\xi|}} \lesssim \|w\|_{X^{s-\frac{1}{2}-2\epsilon,\frac{1}{2}+}_{|\tau=|\xi|}} $$
for $s> \frac{3}{4}$ and $\epsilon >0$ sufficiently small. 
Next we interpolate between (\ref{Tao}) and the estimate
$$\|v\|_{L^{\infty}_x L^{\infty}_t} = \|v\|_{L^{\infty}_t L^{\infty}_x} \lesssim \|v\|_{L^{\infty}_t H^{\frac{3}{2}+}_x} \lesssim \|v\|_{X^{\frac{3}{2}+,\frac{1}{2}+}_{|\tau|=|\xi|}} $$
with interpolation parameter $\theta = 1-12\epsilon$ and obtain for $s>\frac{3}{4}$ and $\epsilon>0$ sufficiently small
$$ \|v\|_{L^{\frac{4}{1-12\epsilon}}_x L^{\frac{2}{1-12\epsilon}}_t} \lesssim \|v\|_{X^{\frac{1}{4}+15\epsilon+,\frac{1}{2}+}_{|\tau||\xi|}} \lesssim \|v\|_{X^{s-\frac{1}{2}+2\epsilon,\frac{1}{2}+}_{|\tau|=|\xi|}} \, . $$
Interpolation between Strichartz' estimate and the trivial identity $\|w\|_{L^2_x L^2_t} = \|w\|_{X^{0,0}_{|\xi|=|\tau|}}$ gives $\|w\|_{L^{\frac{16}{7}}_x L^{\frac{16}{7}}_t} \lesssim \|w\|_{X^{\frac{1}{8},\frac{1}{8}+}_{|\tau|=|\xi|}}$. Finally interpolating this again with (\ref{Tao}) with interpolation parameter $\theta=1-16\epsilon$ we arrive at
$$\|w\|_{L^{\frac{4}{1+12\epsilon}}_x L^{\frac{2}{1-2\epsilon}}_t} \lesssim \|w\|_{X^{\frac{1}{4}-2\epsilon,\frac{1}{2}+}_{|\tau|=|\xi|}} \, .$$
Summarizing all these results we obtain (\ref{N5}).
\end{proof}

For the existence part of Theorem \ref{Theorem} we rely on this result. We remark that any solution of (1.1),(1.2) in temporal gauge $A_0=0$ with $\phi,A \in C^0([0,T],H^1) \cap C^1([0,T],L^2)$ is via a gauge transformation equivalent to a solution of (\ref{1.11}),(\ref{1.12}) and (\ref{1.13}), which fulfills $A^{cf}(0)=0$, with the same regularity. Using also the uniqueness in the spaces of $X^{s,b}$-type which appear in Theorem \ref{Theorem2} we easily see that our Theorem \ref{Theorem} is a direct consequence of the following Proposition.
\begin{prop}
\label{Prop.1}
Let $T>0$. Assume $(\phi,A^{df},A^{cf})$ is a solution of (\ref{1.11}),(\ref{1.12}),(\ref{1.13}) with initial conditions
$$\phi(0)=\phi_0 \in H^1 \, , \, \partial_t\phi(0) = \phi_1 \in L^2 \, , \, A^{df}(0)= (-\Delta)^{-1}curl\, curl \,a \in H^1 \, , $$ $$ \partial_t A^{df}(0) = (-\Delta)^{-1}curl\, curl \,a' \in L^2 \, , \, A^{cf}(0) = 0 $$
and
$$\phi \in C^0([0,T],H^1) \cap C^1([0,T],L^2) \, , \, A \in C^0([0,T],H^1) \cap C^1([0,T],L^2) \, . $$
Then
$$ \phi_{\pm} \in X_{\pm}^{\frac{3}{4}+,\frac{3}{4}+}[0,T] \, , \, A^{df}_{\pm} \in X_{\pm}^{\frac{3}{4}+,\frac{3}{4}+}[0,T] \, , \, A^{cf} \in X^{1+,\frac{1}{2}+}_{\tau =0}[0,T] \, . $$
\end{prop}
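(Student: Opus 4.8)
\medskip

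\noindent\textbf{Proof proposal.} The plan is a bootstrap argument, run on short time intervals, that lifts the a priori $\langle|\tau|-|\xi|\rangle$‑regularity of the given finite‑energy solution step by step up to the target level. First I reduce to a local statement. Since $\phi,A\in C^0([0,T],H^1)\cap C^1([0,T],L^2)$ and $[0,T]$ is compact, $M:=\sup_{t\in[0,T]}(\|\phi(t)\|_{H^1}+\|\partial_t\phi(t)\|_{L^2}+\|A(t)\|_{H^1}+\|\partial_tA(t)\|_{L^2})<\infty$. I fix a length $T_*=T_*(M)>0$ to be chosen below and observe that it suffices to prove the conclusion on each subinterval $I=[t_0,t_0+T_*]\cap[0,T]$, since then finitely many such $I$ (with $T_*$ uniform in $t_0$) cover $[0,T]$ and the bounds patch. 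On a fixed $I$, extending the restrictions of $\phi,A^{df},A^{cf}$ to $\mathbb R^{1+3}$ (using (\ref{1.11})--(\ref{1.13}) and free evolution past the endpoints of $I$) and multiplying by a time cutoff produces functions in $H^1_{t,x}(\mathbb R^{1+3})$ with norm $\lesssim_{T_*}M$; the elementary embedding $H^1_{t,x}\hookrightarrow X^{\sigma,\beta}_{|\tau|=|\xi|}$ (and $\hookrightarrow X^{\sigma,\beta}_{\tau=0}$), valid for $\sigma,\beta\ge 0$ with $\sigma+\beta\le 1$, then yields the crude a priori bounds $\phi_\pm,A^{df}_\pm\in X^{3/4+,\beta_0}_\pm(I)$ and $A^{cf}\in X^{1,0}_{\tau=0}(I)\cap X^{3/4+,\beta_0}_{\tau=0}(I)$, with $\beta_0=1/8$, say.

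Next I bootstrap the exponent $\beta$ for $\phi$ and $A^{df}$. On $I$ I split $\phi=\phi^{lin}+\phi^{Duh}$, $A^{df}=(A^{df})^{lin}+(A^{df})^{Duh}$ into the free evolution of the Cauchy data at $t_0$ plus the Duhamel term; the free evolution of $H^1$ data, localized to $I$, lies in every $X^{1,\beta}_\pm(I)\subset X^{3/4+,3/4+}_\pm(I)$, so the linear parts are harmless. Into the Duhamel parts I insert the nonlinearities of (\ref{1.12}),(\ref{1.13}), rewrite $A^{df}\cdot\nabla\phi$ and $P(\phi\,\overline{\nabla\phi})$ by the null‑form identities (\ref{1.13'}),(\ref{1.12'}), and estimate using (\ref{N1})--(\ref{N4}) from the proof of Theorem \ref{Theorem2}, the wave‑Sobolev estimates of Proposition \ref{Prop.2}, Tao's hybrid estimates for products of a wave‑Sobolev factor and a $\tau=0$‑Sobolev factor (\cite{T1}; see also Lemmas \ref{Lemma1}, \ref{Lemma2}), together with interpolation. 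Exploiting the ``room'' the paper emphasizes in these estimates, when they are run with the current input norms at a level $\beta$ — with $A^{cf}$ carried at a matching level and improved in parallel via (\ref{1.11}) and (\ref{N1}) — they bound the nonlinearities in $X^{s-1,\beta'-1}$ for some fixed $\beta'\ge\beta+\theta$, $\theta>0$, as long as $\beta\le 3/4+$; the inhomogeneous estimate for $\Box$ then places the Duhamel parts in $X^{s,\beta'}(I)$, where for $\beta'\le 1/2$ I use the form of that estimate carrying a positive power of $|I|=T_*$ — this is how the shortness of $I$ is spent. Finitely many iterations lift $\beta$ from $\beta_0$ to $3/4+$, so $\phi_\pm,A^{df}_\pm\in X^{3/4+,3/4+}_\pm(I)$.

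It remains to place $A^{cf}$ in $X^{1+,1/2+}_{\tau=0}(I)$. With $\phi_\pm\in X^{3/4+,3/4+}_\pm(I)$ established, estimate (\ref{N1}) — in which $|\nabla|^{-1}$ may be replaced by $\langle\nabla\rangle^{-1}$, being harmless in three dimensions (\cite{T}) — applied to (\ref{1.11}) gives $\partial_tA^{cf}\in X^{1+,-1/2+}_{\tau=0}(I)$. Integrating in time, i.e. using $\|A^{cf}\|_{X^{\sigma,1/2+}_{\tau=0}(I)}\lesssim\|A^{cf}(t_0)\|_{H^\sigma}+\|\partial_tA^{cf}\|_{X^{\sigma,-1/2+}_{\tau=0}(I)}$ for the trivial evolution with $\sigma=1+$, yields $A^{cf}\in X^{1+,1/2+}_{\tau=0}(I)$; the datum $A^{cf}(t_0)$ lies in $H^{1+}$ because $A^{cf}(0)=0$ on the first interval and, on subsequent ones (processed left to right), because $X^{1+,1/2+}_{\tau=0}(I')\hookrightarrow C^0(I',H^{1+})$. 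Feeding this back into (\ref{N3}),(\ref{N4}) re‑closes the estimates for $\phi$ and $A^{df}$, and the Proposition follows on $I$, hence on $[0,T]$.

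The main obstacle is making this bootstrap genuinely close. One must verify that the wave‑Sobolev bilinear estimates of d'Ancona--Foschi--Selberg (Proposition \ref{Prop.2}), Tao's hybrid estimates, and the null‑form cancellations (\ref{1.13'}),(\ref{1.12'}) leave a \emph{uniform} margin $\theta>0$ in the $\langle|\tau|-|\xi|\rangle$‑weight at every intermediate level $\beta_0\le\beta\le 3/4+$, that the iteration can be started from the crude bounds coming only from $C^0H^1\cap C^1L^2$, and that the passage through $\beta=1/2$ — where the standard inhomogeneous $\Box$‑estimate is unavailable — is handled by the short‑interval reduction and the $|I|^{+}$‑gain. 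A second genuine difficulty is that $A^{cf}$ is required at $X^{1+,\cdot}_{\tau=0}$, i.e. at \emph{higher} spatial regularity than its datum $A^{cf}(0)\in H^1$ affords; this extra smoothing cannot be read off from the crude embedding but must be produced entirely by the elliptic factor $(-\Delta)^{-1}\nabla$ in (\ref{1.11}) acting on the improved quadratic expression $\phi\,\overline{\partial_t\phi}$, which is exactly why the mixed wave‑Sobolev/$\tau=0$‑Sobolev estimate (\ref{N1}), rather than a pure wave‑Sobolev bilinear estimate, is needed.
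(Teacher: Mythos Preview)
Your bootstrap has a real gap, which you flag in the last paragraph but do not close: you never verify that the nonlinear estimates give a uniform gain $\theta>0$ in the $b$-exponent at the low starting level $\beta_0=\tfrac18$. The estimates (\ref{N1})--(\ref{N4}) are proved only for input at $b=\tfrac34+\epsilon$; at $b=\tfrac18$ the hypotheses of Proposition~\ref{Prop.2} tighten sharply (e.g.\ $b_0+b_1+b_2>\tfrac12$ already forces $b_0>\tfrac14$, after which the $s$-constraints leave almost no room at $s=\tfrac34+$), and the null-form gain from the angle bound degrades correspondingly. There is also a circularity you do not break: upgrading $A^{cf}$ via (\ref{N1}) needs $\phi$ at $b=\tfrac34+$, while upgrading $\phi$ through the $A^{cf}$-terms in (\ref{1.13}) via (\ref{N3}) needs $A^{cf}\in X^{s+\frac14,\frac12+}_{\tau=0}$; your starting bound $A^{cf}\in X^{1,0}_{\tau=0}\cap X^{3/4+,1/8}_{\tau=0}$ meets neither requirement, so the ``parallel improvement'' of $A^{cf}$ cannot get off the ground.

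The paper avoids all of this by a different and much simpler first step. Using \emph{only} the physical-space information $\phi,A\in L^\infty_tH^1_x$ and the dual Strichartz embedding $L^{2-}_tL^{3/2}_x\hookrightarrow X^{-1/3,-1/6-}_{|\tau|=|\xi|}$ (from Lemma~\ref{Lemma1}), it places every nonlinearity of (\ref{1.12}),(\ref{1.13}) --- crudely, with no null structure and no $X^{s,b}$ input whatsoever --- in $X^{-1/3,-1/6-}$ by H\"older in $L^p_tL^q_x$. One inhomogeneous estimate then gives $\phi_\pm,A^{df}_\pm\in X^{2/3,5/6-}_\pm$ in a single stroke; interpolation with the trivial $X^{1,0}_\pm$ membership yields $X^{4/5-,1/2+}_\pm$. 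Only \emph{after} $b>\tfrac12$ is secured does the paper bring in the null forms, Proposition~\ref{Prop.2}, and Lemma~\ref{Lemma2} to obtain $A^{cf}\in X^{1+,1/2+}_{\tau=0}$ and then $\phi_\pm,A^{df}_\pm\in X^{3/4+,3/4+}_\pm$, each in one further step. No iteration in $\beta$, no short-interval decomposition, and no crossing of $\beta=\tfrac12$ is needed.
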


\section{Proof of Prop. \ref{Prop.1}}
We use the equivalent system (\ref{1.11*}),(\ref{1.12*}),(\ref{1.13*}) with initial conditions (\ref{1.14*}),(\ref{1.15*}) and $A^{cf}(0) =0$. We drop $[0,T]$ from spaces of the type $X^{s,b}[0,T]$.\\
{\bf Claim 1:} $ A^{df}_{\pm} \in X^{\frac{2}{3},\frac{5}{6}-}_{\pm}$ \\
From (\ref{1.12*}) we obtain
\begin{align*}
\|A^{df}_{\pm}\|_{X^{\frac{2}{3},\frac{5}{6}--}_{\pm}} \lesssim \|A^{df}_{\pm}(0)\|_{H^{\frac{2}{3}}} + \|\phi \overline{\nabla \phi} \|_{X^{-\frac{1}{3},-\frac{1}{6}-}_{\pm}} &+ \| A |\phi|^2 \|_{X^{-\frac{1}{3},-\frac{1}{6}-}_{\pm}} \\ & + \sum_{\pm} \|A_{\pm}\|_{X^{-\frac{1}{3},-\frac{1}{6}-}_{\pm}}  . 
\end{align*}
By Lemma \ref{Lemma1} with $r=3$ and $q=2+$ we have
$$\|u\|_{L^{2+}_t L^3_x} \lesssim \| |\nabla|^{\frac{1}{3}} u\|_{X^{0,\frac{1}{6}+}_{|\tau|=|\xi|}} \, . $$
Hence by duality
$$ \|\phi \overline{\nabla \phi} \|_{X^{-\frac{1}{3},-\frac{1}{6}-}_{\pm}} \hspace{-0.1em}\lesssim \|\phi \overline{\nabla \phi}\|_{L^{2-}_t L^{\frac{3}{2}}_x} \hspace{-0.1em} \lesssim \|\phi\|_{L^{\infty}_t L^6_x} \|\nabla \phi\|_{L^{\infty}_t L^2_x} T^{\frac{1}{2}+} \hspace{-0.1em}\lesssim \|\phi\|_{L^{\infty}_t H^1_x}^2 T^{\frac{1}{2}+} \hspace{-0.1em}< \hspace{-0.1em}\infty \,.  $$
Moreover
\begin{align*}
\|A|\phi|^2\|_{X^{-\frac{1}{3},-\frac{1}{6}-}_{\pm}} & \lesssim \| A|\phi|^2\|_{L^{2-}_t L^{\frac{3}{2}}_x} \lesssim \|A\|_{L^{\infty}_t L^{\frac{9}{2}}_x} \|\phi\|^2_{L^{\infty}_t L^{\frac{9}{2}}_x} T^{\frac{1}{2}+} \\
& \lesssim \| A \|_{L^{\infty}_t H^1_x} \|\phi\|^2_{L^{\infty}_t H^1_x} T^{\frac{1}{2}+} < \infty \, .
\end{align*}
The linear term is easily estimated by $T^{\frac{1}{2}} \|A\|_{L^{\infty}_t L^2_x}$.\\
{\bf Claim 2:} $\phi_{\pm} \in X^{\frac{2}{3},\frac{5}{6}-}_{\pm}$ \\
From (\ref{1.13*}) we obtain
\begin{align*}
\|\phi_{\pm}\|_{X^{\frac{2}{3},\frac{5}{6}--}_{\pm}} & \lesssim  \|\phi_{\pm}(0)\|_{H^{\frac{2}{3}}} + \|div \,A \,\phi \|_{X^{-\frac{1}{3},-\frac{1}{6}-}_{\pm}} + \|A  \cdot \nabla \phi \|_{X^{-\frac{1}{3},-\frac{1}{6}-}_{\pm}} \\
& \quad + \|A_j A^j \phi \|_{X^{-\frac{1}{3},-\frac{1}{6}-}_{\pm}} + \sum_{\pm} \|\phi_{\pm}\|_{X^{-\frac{1}{3},-\frac{1}{6}-}_{\pm}} \, .
\end{align*}
Because $A$ and $\phi$ have the same regularity all the terms can be treated exactly as in claim 1. \\
{\bf Claim 3:} $A^{df}_{\pm} \in X^{\frac{4}{5}-,\frac{1}{2}+}_{\pm}$ , $ \phi_{\pm} \in X^{\frac{4}{5}-,\frac{1}{2}+}_{\pm}$. \\
This follows by interpolation from $A^{df}_{\pm}, \phi_{\pm} \in X^{\frac{2}{3},\frac{5}{6}-}_{\pm} \cap X^{1,0}_{\pm}$. \\
{\bf Claim 4:} $A^{cf} \in X^{1+,\frac{1}{2}+}_{\tau =0}$ \\
By (\ref{1.11*}) and claim 3 we only have to prove (recalling $A^{cf}(0)=0$):
\begin{equation}
\label{1.17}
\| |\nabla|^{-1} (\phi \overline{\partial_t \phi}) \|_{X^{1+,-\frac{1}{2}+}_{\tau=0}} \lesssim \|\phi\|^2_{X^{\frac{4}{5}-,\frac{1}{2}+}_{|\tau|=|\xi|}}
\end{equation}
By \cite{T}, Cor. 8.2 we can replace the left hand side by $\|\phi \overline{\partial_t \phi}\|_{X^{0+,-\frac{1}{2}+}}$ (the singularity of $|\nabla|^{-1}$ is harmless in three dimensions). (\ref{1.17}) is equivalent to
\begin{align}
\label{1.18}
\int_* \frac{\widehat{u}_1(\xi_1,\tau_1)}{\langle |\xi_1| - |\tau_1| \rangle^{\frac{1}{2}+} \langle \xi_1 \rangle^{\frac{4}{5}-}} \frac{|\tau_2| \widehat{u}_2(\xi_2,\tau_2)}{\langle |\xi_2| - |\tau_2| \rangle^{\frac{1}{2}+} \langle \xi_2 \rangle^{\frac{4}{5}-}} \frac{\widehat{u}_3(\xi_3,\tau_3)\langle \xi_3 \rangle^{0+}}{\langle \tau_3 \rangle^{\frac{1}{2}-}} d \xi d \tau \lesssim \prod_{i=1}^3 \|u_i\|_{L^2_{tx}} \, ,
\end{align}
where $d\xi = d\xi_1 d\xi_2 d\xi_3$ , $d\tau = d\tau_1 d\tau_2 d\tau_3$ and * denotes integration over $\sum_{i=1}^3 \xi_i = \sum_{i=1}^3 \tau_i = 0$. The Fourier transforms are nonnegative without loss of generality.\\
{\bf Case 1:} ( $|\xi_1| \gg |\tau_1|$ or $|\tau_1| \gg |\xi_1|$ ) and ( $|\xi_2| \gg |\tau_2|$ or $|\tau_2| \gg |\xi_2 |$ ). \\
We obtain by Sobolev's multiplication rule
\begin{align*}
\Big|\int \int v_1 \partial_t v_2 v_3 dx dt \Big| & \lesssim \|v_1\|_{H^{\frac{1}{2}+}_t H^{\frac{3}{4}+}_x} \|\partial_t v_2\|_{H^{-\frac{1}{2}+}_t H^{\frac{3}{4}+}_x} \|v_3\|_{H^{\frac{1}{2}-}_t H^{0-}_x} \\
& \lesssim \|v_1\|_{X^{\frac{3}{4}+,\frac{1}{2}+}_{\tau =0}} \|v_2\|_{X^{\frac{3}{4}+,\frac{1}{2}+}_{\tau =0}} \|v_3\|_{X^{0-,\frac{1}{2}-}_{\tau =0}} \, .
\end{align*}
Our frequency assumptions imply for $j=1$ and $j=2$:
$$ \langle \tau_j \rangle^{\frac{1}{2}+} \langle \xi_j \rangle^{\frac{3}{4}+} \lesssim \langle \xi_j \rangle^{\frac{3}{4}+} \langle |\tau_j| - |\xi_j| \rangle^{\frac{1}{2}+} \, , $$
so that
$$ \|v_j\|_{X^{\frac{3}{4}+,\frac{1}{2}+}_{\tau =0}} \lesssim \|v_j\|_{X^{\frac{3}{4}+,\frac{1}{2}+}_{|\tau|=|\xi|}} \, . $$
This implies (\ref{1.17}) (with $\frac{4}{5}-$ replaced by $\frac{3}{4}+$ even). \\
{\bf Case 2:} $|\xi_2| \sim |\tau_2|$ and ($|\xi_1| \gg |\tau_1|$ or $|\xi_1| \ll |\tau_1|$).\\
Using $\tau_1+\tau_2+\tau_3=0$ one easily checks
\begin{equation}
\label{1.18'}
|\tau_1| + |\tau_2| \lesssim \langle \tau_1 \rangle^{\frac{1}{2}+} \langle \tau_3 \rangle^{\frac{1}{2}-} + \langle \tau_1 \rangle^{\frac{1}{2}-} \langle \xi_2 \rangle^{\frac{1}{2}+} + \langle \tau_3 \rangle^{\frac{1}{2}-} \langle \xi_2 \rangle^{\frac{1}{2}-} \, .
\end{equation}
We want to show
\begin{equation}
\label{1.19}
\| \phi_1 \overline{\partial_t \phi_2}\|_{X^{0+,-\frac{1}{2}+}_{\tau =0}} \lesssim \|\phi_1\|_{X^{\frac{4}{5}-,\frac{1}{2}+}_{\tau =0}} \|\phi_2\|_{X^{\frac{4}{5}-,\frac{1}{2}+}_{|\tau| = |\xi|}} \, , 
\end{equation}
which implies (\ref{1.17}) by the estimate
$\|\phi_1\|_{X^{\frac{4}{5}-,\frac{1}{2}+}_{\tau=0}} \lesssim \|\phi_1\|_{X^{\frac{4}{5}-,\frac{1}{2}+}_{|\tau|=|\xi|}} \, . $
(\ref{1.19}) is equivalent to
\begin{align}
\label{1.20}
\int_* \frac{\widehat{u}_1(\xi_1,\tau_1)}{\langle \tau_1 \rangle^{\frac{1}{2}+} \langle \xi_1 \rangle^{\frac{4}{5}-}} \frac{|\tau_2| \widehat{u}_2(\xi_2,\tau_2)}{\langle |\xi_2| - |\tau_2| \rangle^{\frac{1}{2}+} \langle \xi_2 \rangle^{\frac{4}{5}-}} \frac{\widehat{u}_3(\xi_3,\tau_3)\langle \xi_3 \rangle^{0+}}{\langle \tau_3 \rangle^{\frac{1}{2}-}} d \xi d \tau \lesssim \prod_{i=1}^3 \|u_i\|_{L^2_{tx}} \, .
\end{align}
Using (\ref{1.18'}) and $\langle \xi_3 \rangle^{0+} \lesssim \langle \xi_1 \rangle^{0+} + \langle \xi_2 \rangle^{0+}$ we have to show the following three estimates:
\begin{align}
\label{1.21}
\big|\int \int v_1 v_2 v_3 dx dt \big| &\lesssim \|v_1\|_{X^{\frac{4}{5}-,0}_{\tau=0}} \|v_2\|_{X^{\frac{4}{5}-,\frac{1}{2}+}_{|\tau|=|\xi|}} \|v_3\|_{X^{0,0}_{\tau=0}} \\
\label{1.22}
\big|\int \int v_1 v_2 v_3 dx dt \big| &\lesssim \|v_1\|_{X^{\frac{4}{5}-,0+}_{\tau=0}} \|v_2\|_{X^{\frac{3}{10}-,\frac{1}{2}+}_{|\tau|=|\xi|}} \|v_3\|_{X^{0,\frac{1}{2}-}_{\tau=0}} \\
\label{1.23}
\big|\int \int v_1 v_2 v_3 dx dt \big| &\lesssim \|v_1\|_{X^{\frac{4}{5}-,\frac{1}{2}+}_{\tau=0}} \|v_2\|_{X^{\frac{3}{10}-,\frac{1}{2}+}_{|\tau|=|\xi|}} \|v_3\|_{X^{0,0}_{\tau=0}} \, .
\end{align}
(\ref{1.21}) follows by the Sobolev multiplication rule, and (\ref{1.22}) as follows by Lemma \ref{Lemma2}:
\begin{align*}
\big| \int \int v_1 v_2 v_3 dxdt \big| & \lesssim \|v_1\|_{L^4_x L^{2+}_t} \|v_2\|_{L^4_x L^2_t} \|v_3\|_{L^2_x L^{\infty -}_t} \\
& \lesssim \|v_1\|_{H^{\frac{3}{4}}_x H^{0+}_t} \|v_2\|_{X^{\frac{1}{4},\frac{1}{2}+}_{|\tau|=|\xi|}} \|v_3\|_{L^2_x H^{\frac{1}{2}-}_t} \\
& \lesssim \|v_1\|_{X^{\frac{3}{4},0+}_{\tau=0}} 
\|v_2\|_{X^{\frac{1}{4},\frac{1}{2}+}_{|\tau|=|\xi|}}
 \|v_3\|_{X^{0,\frac{1}{2}-}_{\tau=0}} \, .
\end{align*}
Finally (\ref{1.23}) follows from
\begin{align*}
\big| \int \int v_1 v_2 v_3 dxdt \big| & \lesssim \|v_1\|_{L^4_x L^{\infty}_t} \|v_2\|_{L^4_x L^2_t} \|v_3\|_{L^2_x L^2_t} \\
& \lesssim \|v_1\|_{H^{\frac{3}{4}}_x H^{\frac{1}{2}+}_t} \|v_2\|_{X^{\frac{1}{4},\frac{1}{2}+}_{|\tau|=|\xi|}} \|v_3\|_{L^2_x L^2_t} \\
& \lesssim \|v_1\|_{X^{\frac{3}{4},\frac{1}{2}+}_{\tau=0}} 
\|v_2\|_{X^{\frac{1}{4},\frac{1}{2}+}_{|\tau|=|\xi|}}
 \|v_3\|_{X^{0,0}_{\tau=0}} \, .
\end{align*}
{\bf Case 3:} $|\xi_1| \sim |\tau_1|$ and ($|\xi_2| \gg |\tau_2|$ or $|\xi_2| \ll |\tau_2|$).\\
This can be handled exactly as Case 2, because by (\ref{1.18'}) it is irrelevant which of the factors $\phi_1$ or $\phi_2$ contains the time derivative. \\
{\bf Case 4:} $|\xi_2| \sim |\tau_2|$ and $|\xi_1| \sim |\tau_1|$. \\
In this case we use
\begin{align*}
|\tau_2| \lesssim \langle \tau_3 + \tau_1 \rangle^{\frac{1}{2}-} \langle \tau_2 \rangle^{\frac{1}{2}+} \lesssim (\langle \tau_3 \rangle^{\frac{1}{2}-} + \langle \xi_1 \rangle^{\frac{1}{2}++} \langle \tau_1 \rangle^{0--}) \langle \xi_2 \rangle^{\frac{1}{2}+} \, .
\end{align*}
This reduces the desired estimate (\ref{1.18}) to the following two estimates:
\begin{align}
\label{1.25}
\int_* \frac{\widehat{u}_1(\xi_1,\tau_1)}{\langle |\xi_1| - |\tau_1| \rangle^{\frac{1}{2}+} \langle \xi_1 \rangle^{\frac{4}{5}--}} \frac{ \widehat{u}_2(\xi_2,\tau_2)}{\langle |\xi_2| - |\tau_2| \rangle^{\frac{1}{2}+} \langle \xi_2 \rangle^{\frac{3}{10}--}} \widehat{u}_3(\xi_3,\tau_3) d \xi d \tau \lesssim \prod_{i=1}^3 \|u_i\|_{L^2_{tx}} \\
\nonumber
\int_* \frac{\widehat{u}_1(\xi_1,\tau_1)}{\langle |\xi_1| - |\tau_1| \rangle^{\frac{1}{2}+} \langle \xi_1 \rangle^{\frac{3}{10}--} \langle \tau_1 \rangle^{0++}} \frac{ \widehat{u}_2(\xi_2,\tau_2)}{\langle |\xi_2| - |\tau_2| \rangle^{\frac{1}{2}+} \langle \xi_2 \rangle^{\frac{3}{10}--}} \frac{\widehat{u}_3(\xi_3,\tau_3)}{\langle \tau_3 \rangle^{\frac{1}{2}-}} d \xi d \tau \\
\label{1.26}
\lesssim \prod_{i=1}^3 \|u_i\|_{L^2_{tx}} \, .
\end{align}
(\ref{1.25}) is a consequence of Proposition \ref{Prop.2} with the choice $s_0=b_0=0$ , $s_1 = \frac{4}{5}-$ , $s_2 = \frac{3}{10}-$ , $b_1=b_2=\frac{1}{2}+$. \\
(\ref{1.26}) is proven using Lemma \ref{Lemma2} as follows:
\begin{align*}
\big | \int \int v_1 v_2 v_3 dx dt \big| & \lesssim \|v_1\|_{L^4_x L^{2+}_t} \|v_2\|_{L^4_x L^2_t} \|v_3|_{L^2_x L^{\infty -}_t} \\
& \lesssim \|v_1\|_{L^4_x H^{0+}_t} \|v_2\|_{L^4_x L^2_t} \|v_3\|_{L^2_x H^{\frac{1}{2}-}_t} \\
& \lesssim \| \langle \partial_t \rangle^{0+} v_1 \|_{X^{\frac{1}{4},\frac{1}{2}+}_{|\tau|=|\xi|}} \|v_2\|_{X^{\frac{1}{4},\frac{1}{2}+}_{|\tau|=|\xi|}} \|v_3\|_{X^{0,\frac{1}{2}-}_{\tau=0}} \, ,
\end{align*}
where $ (\langle \partial_t \rangle^{0+} v)\widehat{\,} (\tau,\xi)  := \langle \tau \rangle^{0+} \widehat{v}(\tau,\xi) $, from which (\ref{1.26}) follows. \\
{\bf Claim 5:} $A^{df}_{\pm} \in X_{\pm}^{\frac{3}{4}+,\frac{3}{4}+} $. \\ 
{\bf Term 1:}
In view of (\ref{1.12*}), (\ref{1.12'}) and claim 3 we have to prove:
$$ \| |\nabla|^{-1} Q_{ij}(\phi_1,  \phi_2)\|_{X^{-\frac{1}{4}+\epsilon,-\frac{1}{4}+}_{\pm''}} \lesssim \|\phi_1\|_{X^{\frac{4}{5}-,\frac{1}{2}+}_{\pm}} \|\phi_2\|_{X^{\frac{4}{5}-,\frac{1}{2}+}_{\pm'}}\, . $$
Using \cite{T}, Corollary 8.2 and thereby ignoring the singularity of $|\nabla|^{-1}$ we can replace the left hand side by $\|Q_{ij}(\phi_1, \phi_2)\|_{X^{-\frac{5}{4}+\epsilon,-\frac{1}{4}+}_{\pm''}}$ . Thus it suffices to give the following estimate:
\begin{equation}
\label{1.27}
\int_* m(\xi_1,\xi_2,\xi_3,\tau_1,\tau_2,\tau_3) \prod_{i=1}^3 \widehat{u}_i(\xi_i,\tau_i) d\xi d\tau \lesssim \prod_{i=1}^3 \|u_i\|_{L^2_{tx}} \, ,
\end{equation}
where 
\begin{align*}
m &= m(\xi_1,\xi_2,\xi_3,\tau_1,\tau_2,\tau_3) \\
&= \frac{|\xi_1 \times \xi_2|}{\langle -\tau_1 \pm|\xi_1|\rangle^{\frac{1}{2}+} \langle \xi_1 \rangle^{\frac{4}{5}-} \, \langle -\tau_2 \pm' |\xi_2| \rangle^{\frac{1}{2}+} \langle \xi_2 \rangle^{\frac{4}{5}-} \, \langle -\tau_3 \pm'' |\xi_3| \rangle^{\frac{1}{4}-} \langle \xi_3 \rangle^{\frac{5}{4}-\epsilon}} \, .
\end{align*}
We define $N_i := \langle \xi_i \rangle$ and denote by $N_{max}$ , $N_{med}$ and $N_{min}$ the largest, second largest and smallest of them. Using $\xi_1 + \xi_2 + \xi_3 =0$ we have $N_{max} \sim N_{med} \ge N_{min}$.

In the following our main task is to give pointwise bounds for $m$, which allow an application of Prop. \ref{Prop.2}.

We apply \cite{S}, Lemma 2.1 to estimate the angle $\angle(\xi_1,\xi_2)$ between two vectors $\xi_1$ and $\xi_2$ and obtain the following bound for the cross product of these vectors.\\
Case a: ($N_1=N_{max}$ and $N_2=N_{min}$) or ( $N_1 = N_{min}$ and $N_2 = N_{max}$). 
\begin{align}
\nonumber
&|\xi_1 \times \xi_2|  \le |\xi_1||\xi_2| \angle(\pm\xi_1,\pm'\xi_2) \\
\nonumber
&\lesssim |\xi_1||\xi_2| \Big( \frac{\langle -\tau_1\pm |\xi_1|  \rangle^{\frac{1}{2}} + \langle -\tau_2\pm' |\xi_2|  \rangle^{\frac{1}{2}}    }{\min(\langle \xi_1 \rangle,\langle \xi_2 \rangle)^{\frac{1}{2}}} + \frac{\langle -\tau_3 \pm'' |\xi_3|\rangle^{\frac{1}{4}-}}{\min(\langle \xi_1 \rangle,\langle \xi_2 \rangle)^{\frac{1}{4}-}} \Big) \\
\label{1.28}
& \lesssim N_{max} N_{min}^{\frac{1}{2}} (\langle |-\tau_1 \pm |\xi_1|\rangle^{\frac{1}{2}} + (\langle -\tau_2 \pm'|\xi_2|\rangle^{\frac{1}{2}}) + N_{max} N_{min}^{\frac{3}{4}+} \langle -\tau_3 \pm''|\xi_3|\rangle^{\frac{1}{4}-} . 
\end{align}
Case b: $N_1 \sim N_2 \sim N_{max}$ and $N_3 = N_{min}$.\\
By the simple observation
$$|\xi_1 \times \xi_2| = |\xi_1 \times (\xi_1+\xi_2)| = |\xi_1 \times \xi_3| \le |\xi_1||\xi_3| \angle(\pm\xi_1,\pm''\xi_3)$$
we obtain the same bound as in case 1.\\
We consider several cases depending on which of the three terms on the right hand side of (\ref{1.28}) is dominant, estimate $m$ in each of these cases and apply Proposition \ref{Prop.2} to deduce (\ref{1.27}).\\
1. If the last term is dominant, we obtain
$$m \lesssim \frac{N_{max} N_{\min}^{\frac{3}{4}+}}{N_1^{\frac{4}{5}-} N_2^{\frac{4}{5}-} N_3^{\frac{5}{4}-\epsilon}} \frac{1}{\langle|\tau_1|-|\xi_1|\rangle^{\frac{1}{2}+} \langle|\tau_2|-|\xi_2|\rangle^{\frac{1}{2}+}} \, . $$
1.1. In the case $N_1 \sim N_2 \sim N_{max}$ and $N_3 = N_{min}$ we obtain the bound
$$m \lesssim \frac{1}{N_1^{\frac{3}{5}-}  N_3^{\frac{1}{2}-\epsilon}} \frac{1}{\langle|\tau_1|-|\xi_1|\rangle^{\frac{1}{2}+} \langle|\tau_2|-|\xi_2|\rangle^{\frac{1}{2}+}} \, .$$
1.2. If $N_3 \sim N_2 \sim N_{max}$ and $N_1 = N_{min}$ (or similarly $N_3 \sim N_1 \sim N_{max}$ and $N_2=N_{min}$) we obtain the bound
$$ m \lesssim \frac{1}{N_1^{\frac{1}{20}-}  N_3^{\frac{21}{20}-\epsilon}} \frac{1}{\langle|\tau_1|-|\xi_1|\rangle^{\frac{1}{2}+} \langle|\tau_2|-|\xi_2|\rangle^{\frac{1}{2}+}}  \,.$$
2. If the first term is dominant (the case where the second term is dominant can be handled similarly), we obtain
$$m \lesssim \frac{N_{max} N_{\min}^{\frac{1}{2}}}{N_1^{\frac{4}{5}-} N_2^{\frac{4}{5}-} N_3^{\frac{5}{4}-\epsilon}} \frac{1}{\langle|\tau_2|-|\xi_2|\rangle^{\frac{1}{2}+} \langle|\tau_3|-|\xi_3|\rangle^{\frac{1}{4}-}} \, . $$                             2.1 If $N_1 \sim N_2 \sim N_{max}$ and $N_3 = N_{min}$ we have
the bound
$$ m \lesssim \frac{1}{N_1^{\frac{3}{5}-}  N_3^{\frac{3}{4}-\epsilon}} \frac{1}{\langle|\tau_2|-|\xi_2|\rangle^{\frac{1}{2}+} \langle|\tau_3|-|\xi_3|\rangle^{\frac{1}{4}-}} \, .$$          
2.2. If $N_2 \sim N_3 \sim N_{max}$ and $N_1 = N_{min}$
we obtain
$$ m \lesssim \frac{1}{N_3^{\frac{21}{20}-\epsilon} N_1^{\frac{3}{10}-}}\frac{1}{\langle|\tau_2|-|\xi_2|\rangle^{\frac{1}{2}+} \langle|\tau_3|-|\xi_3|\rangle^{\frac{1}{4}-}}  \,.$$   
2.3. If $N_1 \sim N_3 \sim N_{max}$ and $N_2 = N_{min}$
this leads to the bound
$$ m \lesssim \frac{1}{N_1^{\frac{21}{20}-\epsilon} N_2^{\frac{3}{10}-}}\frac{1}{\langle|\tau_2|-|\xi_2|\rangle^{\frac{1}{2}+} \langle|\tau_3|-|\xi_3|\rangle^{\frac{1}{4}-}}  \,.$$  
In each of these cases an application of Prop. \ref{Prop.2} leads to the desired estimate (\ref{1.27}). \\
{\bf Term 2:} Moreover we have to give a bound for the cubic term $\|A |\phi|^2\|_{X^{-\frac{1}{4}+,-\frac{1}{4}+}_{|\tau|=|\xi|}}$. \\
By two applications of Prop. \ref{Prop.2} we obtain
\begin{align}
\nonumber
\|A^{df} |\phi|^2 \|_{X^{-\frac{1}{4}+\epsilon,-\frac{1}{4}+}_{|\tau|=|\xi|}} & \lesssim \| A^{df} \|_{X^{\frac{4}{5}-,\frac{1}{2}+}_{|\tau|=|\xi|}} \| |\phi|^2 \|_{X^{\frac{1}{5}+2\epsilon,0}_{|\tau|=|\xi|}} \\
\label{2.1}
& \lesssim \| A^{df} \|_{X^{\frac{4}{5}-,\frac{1}{2}+}_{|\tau|=|\xi|}} \| \phi \|^2_{X^{\frac{4}{5}-,\frac{1}{2}+}_{|\tau|=|\xi|}} \, .
\end{align}
We also obtain
\begin{align}
\nonumber
\|A^{cf} |\phi|^2 \|_{X^{-\frac{1}{4}+\epsilon,-\frac{1}{4}+}_{|\tau|=|\xi|}} & \lesssim \| A^{cf} \|_{X^{1+,\frac{1}{2}+}_{\tau=0}} \| |\phi|^2 \|_{X^{\frac{1}{4}+\epsilon,\frac{1}{4}+}_{|\tau|=|\xi|}} \\
\label{2.2}
& \lesssim \| A^{cf} \|_{X^{1+,\frac{1}{2}+}_{\tau=0}} \| \phi \|^2_{X^{\frac{4}{5}-,\frac{1}{2}+}_{|\tau|=|\xi|}} \, .
\end{align}
The first estimate holds by Sobolev and the second one by Prop. \ref{Prop.2}. Using claim 3 and claim 4 we have proven claim 5. \\
{\bf Claim 6:} $ \phi_{\pm} \in X^{\frac{3}{4}+,\frac{3}{4}+}_{\pm}$.\\
This is the most difficult case. Several terms have to be estimated in view of (\ref{1.13}).\\
{\bf Term 1:} Using (\ref{1.13'}) we first want to show
$$ \| Q_{ij}(\phi,|\nabla|^{-1} R^i A_{df}^j \|_{X^{-\frac{1}{4}+,-\frac{1}{4}+}_{ \pm''}} \lesssim \|\phi\|_{X^{\frac{4}{5}-,\frac{1}{2}+}_{ \pm}} \|A^{df}\|_{X^{\frac{3}{4}+\epsilon,\frac{3}{4}+}_{ \pm'}} \, ,$$ 
which can be controlled by claim 3 and claim 5 ,
and a similar estimate for $Q_{ij}(\phi,|\nabla|^{-1} R^j A_{df}^i)$. We have to give the following estimate:
\begin{equation}
\label{2.3}
\int_* m(\xi_1,\xi_2,\xi_3,\tau_1,\tau_2,\tau_3) \widehat{u}_1(\xi_1,\tau_1)  \widehat{u}_2(\xi_2,\tau_2) \widehat{u}_3(\xi_3,\tau_3) \,d\xi d\tau \lesssim \prod_{i=1}^3 \|u_i\|_{L^2_{xt}} \, , 
\end{equation}
where
\begin{align*}
m = \frac{\frac{|\xi_1 \times \xi_2|}{|\xi_2|}}{\langle -\tau_1 \pm|\xi_1|\rangle^{\frac{1}{2}+} \langle \xi_1 \rangle^{\frac{4}{5}-} \, \langle -\tau_2 \pm' |\xi_2| \rangle^{\frac{3}{4}+} \langle \xi_2 \rangle^{\frac{3}{4}+\epsilon} \, \langle -\tau_3 \pm'' |\xi_3| \rangle^{\frac{1}{4}-} \langle \xi_3 \rangle^{\frac{1}{4}-\epsilon}} \, .
\end{align*}
By an application of \cite{S}, Lemma 2.1 we obtain as above: \\
a. In the case $N_1 \sim N_{max}$ and $N_2 = N_{min}$ the estimate (\ref{1.28}) implies
\begin{align}
\label{2.4}
\frac{|\xi_1 \times \xi_2|}{|\xi_2|}  
 \lesssim \frac{N_{max}}{ N_{min}^{\frac{1}{2}}} (\langle -\tau_1 \pm|\xi_1|\rangle^{\frac{1}{2}} + (\langle -\tau_2 \pm'|\xi_2|\rangle^{\frac{1}{2}}) +\frac{ N_{max}}{ N_{min}^{\frac{1}{4}-}} \langle -\tau_3 \pm''|\xi_3|\rangle^{\frac{1}{4}-} . 
\end{align} 
b. In the case $N_1 = N_{min}$ and $N_2 \sim N_3 \sim N_{max}$ we similarly obtain the improved bound
\begin{align}
\label{2.5}
\frac{|\xi_1 \times \xi_2|}{|\xi_2|}  
 \lesssim  N_{min}^{\frac{1}{2}} (\langle -\tau_1 \pm|\xi_1|\rangle^{\frac{1}{2}} + (\langle -\tau_2 \pm'|\xi_2|\rangle^{\frac{1}{2}}) + N_{min}^{\frac{3}{4}+} \langle -\tau_3 \pm''|\xi_3|\rangle^{\frac{1}{4}-} . 
\end{align} 
c. In the case $N_1 \sim N_2 \sim N_{max}$ and $N_3 = N_{min}$ we obtain
\begin{align*}
\frac{|\xi_1 \times \xi_2|}{|\xi_2|} = \frac{|\xi_1 \times (\xi_1+\xi_2)|}{|\xi_2|} =
\frac{|\xi_1 \times \xi_3|}{|\xi_2|}\angle(\pm\xi_1,\pm''\xi_3) \sim |\xi_3| \angle(\pm\xi_1,\pm''\xi_3) \, ,
\end{align*}
which implies the same bound as in case b.

We obtain the following bounds for $m$ depending on which of the terms on the right hand side of (\ref{2.4}) and (\ref{2.5}) is dominant.\\
1. If the last term is dominant we obtain
$$m \lesssim \frac{1}{N_1^{\frac{4}{5}-} N_2^{\frac{3}{4}+\epsilon} N_3^{\frac{1}{4}-}} \frac{N_{max}}{N_{min}^{\frac{1}{4}-}} \frac{1}{\langle |\xi_2|-|\tau_2| \rangle^{\frac{3}{4}+} \langle |\xi_1|-|\tau_1| \rangle^{\frac{1}{2}+}} \, . $$
1.1. If $N_1 \sim N_3 \sim N_{max}$ and $N_2 = N_{min}$ we have
$$ m \lesssim \frac{1}{N_{max}^{\frac{1}{20}-} N_{min}^{1+\epsilon-}} \frac{1}{\langle |\xi_2|-|\tau_2| \rangle^{\frac{3}{4}+} 
\langle |\xi_1|-|\tau_1| \rangle^{\frac{1}{2}+}} \, . $$
1.2 If $N_1 \sim N_2 \sim N_{max}$ and $N_3=N_{min}$ we obtain
$$ m \lesssim \frac{1}{N_{max}^{\frac{11}{20}+\epsilon-} N_{min}^{\frac{1}{2}-}} \frac{1}{\langle |\xi_2|-|\tau_2| \rangle^{\frac{3}{4}+} 
\langle |\xi_1|-|\tau_1| \rangle^{\frac{1}{2}+}} \, . $$
1.3 If $N_2 \sim N_3 \sim N_{max}$ and $N_1 = N_{min}$ we obtain the bound
$$ m \lesssim \frac{1}{N_{max}^{\epsilon-} N_{min}^{\frac{21}{20}-}} \frac{1}{\langle |\xi_2|-|\tau_2| \rangle^{\frac{3}{4}+} 
\langle |\xi_1|-|\tau_1| \rangle^{\frac{1}{2}+}} \, . $$
2. If the first term is dominant we get
$$m \lesssim \frac{1}{N_1^{\frac{4}{5}-} N_2^{\frac{3}{4}+\epsilon} N_3^{\frac{1}{4}-}} \frac{N_{max}}{N_{min}^{\frac{1}{2}}} \frac{1}{\langle |\xi_2|-|\tau_2| \rangle^{\frac{3}{4}+} \langle |\xi_3|-|\tau_3| \rangle^{\frac{1}{4}-}} \, . $$
2.1. If $N_1 \sim N_3 \sim N_{max}$ and $N_2 = N_{min}$ we get
$$ m \lesssim \frac{1}{N_{max}^{\frac{1}{20}-} N_{min}^{\frac{5}{4}+\epsilon}} \frac{1}{\langle |\xi_2|-|\tau_2| \rangle^{\frac{3}{4}+} 
\langle |\xi_3|-|\tau_3| \rangle^{\frac{1}{4}-}} \, . $$
2.2. If $N_1 \sim N_2 \sim N_{max}$ and $N_3=N_{min}$ we obtain
$$ m \lesssim \frac{1}{N_{max}^{\frac{11}{20}+\epsilon-} N_{min}^{\frac{3}{4}-}} \frac{1}{\langle |\xi_2|-|\tau_2| \rangle^{\frac{3}{4}+} 
\langle |\xi_3|-|\tau_3| \rangle^{\frac{1}{4}-}} \, . $$
2.3 If $N_2 \sim N_3 \sim N_{max}$ and $N_1 = N_{min}$ we use (\ref{2.5}) to obtain
\begin{align*}
m &\lesssim \frac{1}{N_1^{\frac{4}{5}-} N_2^{\frac{3}{4}+\epsilon} N_3^{\frac{1}{4}-}} N_{min}^{\frac{1}{2}} \frac{1}{\langle |\xi_2|-|\tau_2| \rangle^{\frac{3}{4}+} \langle |\xi_3|-|\tau_3| \rangle^{\frac{1}{4}-}} \\
&\lesssim \frac{1}{N_{max}^{1+\epsilon-} N_{min}^{\frac{3}{10}-}} \frac{1}{\langle |\xi_2|-|\tau_2| \rangle^{\frac{3}{4}+} \langle |\xi_3|-|\tau_3| \rangle^{\frac{1}{4}-}} \, . \end{align*}
3. If the second term is dominant we estimate as follows:
$$m \lesssim \frac{1}{N_1^{\frac{4}{5}-} N_2^{\frac{3}{4}+\epsilon} N_3^{\frac{1}{4}-}} \frac{N_{max}}{N_{min}^{\frac{1}{2}}} \frac{1}{\langle |\xi_1|-|\tau_1| \rangle^{\frac{1}{2}+} \langle |\xi_3|-|\tau_3| \rangle^{\frac{1}{4}-}} \, . $$
3.1. If $N_1 \sim N_3 \sim N_{max}$ and $N_2 = N_{min}$ we get
$$ m \lesssim \frac{1}{N_{max}^{\frac{1}{20}-} N_{min}^{\frac{5}{4}+\epsilon-}} \frac{1}{\langle |\xi_1|-|\tau_1| \rangle^{\frac{1}{2}+} 
\langle |\xi_3|-|\tau_3| \rangle^{\frac{1}{4}-}} \, . $$
3.2. If $N_1 \sim N_2 \sim N_{max}$ and $N_3=N_{min}$ we obtain
$$ m \lesssim \frac{1}{N_{max}^{\frac{11}{20}+\epsilon-} N_{min}^{\frac{3}{4}-}} \frac{1}{\langle |\xi_1|-|\tau_1| \rangle^{\frac{1}{2}+} 
\langle |\xi_3|-|\tau_3| \rangle^{\frac{1}{4}-}} \, . $$
2.3 If $N_2 \sim N_3 \sim N_{max}$ and $N_1 = N_{min}$ we get
$$ m \lesssim \frac{1}{N_{max}^{\epsilon-} N_{min}^{\frac{13}{10}-}} \frac{1}{\langle |\xi_1|-|\tau_1| \rangle^{\frac{1}{2}+} 
\langle |\xi_3|-|\tau_3| \rangle^{\frac{1}{4}-}} \, . $$
In each of these cases an application of Prop. \ref{Prop.2} leads to the desired estimate (\ref{2.3}).\\
{\bf Term 2:} Next we want to give similar estimates for the terms containing $A^{cf}$. For $Q_{ij}(\phi,|\nabla|^{-1} R^i A^j_{cf})$ and  $Q_{ij}(\phi,|\nabla|^{-1} R^j A^i_{cf})$ we no longer rely on the null structure so that it behaves like $\partial_i \phi A^{cf}_j$.
They are treated together with the terms $A^{cf}_i \partial^i \phi$ and $\partial_i A^{cf}_i \phi$. We want to prove for $\epsilon > 0$:
$$ \|A_j^{cf} \partial_i \phi\|_{X^{-\frac{1}{5}-\epsilon,0}_{|\tau|=|\xi|}} + \|\partial_i A^{cf}_i \phi\|_{X^{-\frac{1}{5}-\epsilon,0}_{|\tau|=|\xi|}} \lesssim \|A^{cf}\|_{X^{1+,\frac{1}{2}+}_{\tau =0}} \|\phi\|_{X^{\frac{4}{5}-,\frac{1}{2}+}_{|\tau|=|\xi|}} \, , $$
which in view of claim 3 and claim 4 is more than enough (we could allow the $X^{-\frac{1}{4}+,-\frac{1}{4}+}_{|\tau|=|\xi|}$-norm on the left hand side). These estimates would follow if we prove
$$
\int_* m(\xi_1,\xi_2,\xi_3,\tau_1,\tau_2,\tau_3) \widehat{u}_1(\xi_1,\tau_1)  \widehat{u}_2(\xi_2,\tau_2) \widehat{u}_3(\xi_3,\tau_3) d\xi d\tau \lesssim \prod_{i=1}^3 \|u_i\|_{L^2_{xt}} \, , 
$$
where 
$$ m = \frac{|\xi_2| + |\xi_3|}{\langle \xi_1 \rangle^{\frac{1}{5}+\epsilon} \langle \xi_2 \rangle^{\frac{4}{5}-} \langle |\tau_2| - |\xi_2|\rangle^{\frac{1}{2}+}  \langle \xi_3 \rangle^{1+}\langle \tau_3 \rangle^{\frac{1}{2}+}} \, .$$
The following argument is closely related to the proof of a similar estimate in  \cite{T1}.\\
{\bf Case 1:} $|\xi_2| \le |\xi_1|$ ($\Rightarrow$ $|\xi_2|+|\xi_3| \lesssim |\xi_1|$ due to $\xi_1 + \xi_2 + \xi_3 =0$.)
This implies
$$ m \lesssim \frac{\langle \xi_1 \rangle^{\frac{4}{5}-\epsilon}}{ \langle \xi_2 \rangle^{\frac{4}{5}-} \langle |\tau_2| - |\xi_2|\rangle^{\frac{1}{2}+}  \langle \xi_3 \rangle^{1+}\langle \tau_3 \rangle^{\frac{1}{2}+}} \, .$$
By two applications of the averaging principle (\cite{T}, Prop. 5.1) we may replace $m$ by
$$ m' = \frac{\langle \xi_1 \rangle^{\frac{4}{5}-\epsilon} \chi_{||\tau_2|-|\xi_2||\sim 1} \chi_{|\tau_3| \sim 1}}{\langle \xi_2 \rangle^{\frac{4}{5}-} \langle \xi_3 \rangle^{1+}} \, . $$
Let now $\tau_2$ be restricted to the region $\tau_2 =T + O(1)$ for some integer $T$. Then $\tau_1$ is restricted to $\tau_1 = -T + O(1)$, because $\tau_1 + \tau_2 + \tau_3 =0$, and $\xi_2$ is restricted to $|\xi_2| = |T| + O(1)$. The $\tau_1$-regions are essentially disjoint for $T \in {\mathbb Z}$ and similarly the $\tau_2$-regions. Thus by Schur's test (\cite{T}, Lemma 3.11) we only have to show
\begin{align*}
 &\sup_{T \in {\mathbb Z}} \int_* \frac{\langle \xi_1 \rangle^{\frac{4}{5}-\epsilon} \chi_{\tau_1=-T+O(1)} \chi_{\tau_2=T+O(1)} \chi_{|\tau_3|\sim 1} \chi_{|\xi_2|=|T|+O(1)}}{\langle T \rangle^{\frac{4}{5}-} \langle \xi_3 \rangle^{1+}}\cdot \\
 & \hspace{14em} \cdot\widehat{u}_1(\xi_1,\tau_1) \widehat{u}_2(\xi_2,\tau_2)
\widehat{u}_3(\xi_3,\tau_3) d\xi d\tau \lesssim \prod_{i=1}^3 \|u_i\|_{L^2_{xt}} \, . 
\end{align*}
The $\tau$-behaviour of the integral is now trivial, thus we reduce to
$$ \sup_T \int_{\sum_{i=1}^3 \xi_i =0} m^*(\xi_1,\xi_2,\xi_3,T) \widehat{f}_1(\xi_1)\widehat{f}_2(\xi_2)\widehat{f}_2(\xi_3)d\xi \lesssim \prod_{i=1}^3 \|f_i\|_{L^2_x} $$
with
$$ m^*(\xi_1,\xi_2,\xi_3,T) = \frac{\langle \xi_1 \rangle^{\frac{4}{5}-\epsilon} \chi_{|\xi_2|=|T|+O(1)}}{\langle T \rangle^{\frac{4}{5}-} \langle \xi_3 \rangle^{1+}} \, . $$
It only remains to consider the following two cases: \\
Case 1.1: $|\xi_1| \sim |\xi_3| \gtrsim T$. We obtain in this case
$$m^*(\xi_1,\xi_2,\xi_3,T) \lesssim \frac{ \chi_{|\xi_2|=|T|+O(1)}}{\langle T \rangle^{1+\epsilon-}} \, . $$
Now we estimate as follows
\begin{align*}
&\frac{1}{\langle T \rangle^{1+\epsilon-}} \int_{\sum_{i=1}^3 \xi_i =0}  \chi_{|\xi_2|=T+O(1)} \widehat{f}_1(\xi_1)\widehat{f}_2(\xi_2)\widehat{f}_3(\xi_3) d\xi \\
&\lesssim \frac{1}{\langle T \rangle^{1+\epsilon-}} \|f_1\|_{L^2} \|f_3\|_{L^2} \| {\mathcal F}^{-1}(\chi_{|\xi|=T+O(1)} \widehat{f}_2)\|_{L^{\infty}({\mathbb R}^3)} \\
&\lesssim \hspace{-0.1em} \frac{1}{\langle T \rangle^{1+\epsilon-}} \|f_1\|_{L^2} \|f_3\|_{L^2} \| \chi_{|\xi|=T+O(1)} \widehat{f}_2\|_{L^1({\mathbb R}^3)}\hspace{-0.1em}\lesssim \hspace{-0.1em}\frac{T}{\langle T \rangle^{1+\epsilon-}}  \prod_{i=1}^3 \|f_i\|_{L^2} \hspace{-0.1em}\lesssim\hspace{-0.1em}
\prod_{i=1}^3 \|f_i\|_{L^2}.
\end{align*}
Case 1.2: $|\xi_1| \sim T \gtrsim |\xi_3|$. We get
$$m^*(\xi_1,\xi_2,\xi_3,T) \lesssim \frac{ \chi_{|\xi_2|=|T|+O(1)}}{\langle \xi_3 \rangle^{1+}} \, . $$
An elementary calculation shows that
\begin{align*}
&\Big| \int_{\sum_{i=1}^3 \xi_i =0}  \widehat{f}_1(\xi_1)\widehat{f}_2(\xi_2) \chi_{|\xi_2| = T+O(1)} \widehat{f}_3(\xi_3) \langle \xi_3 \rangle^{-1-} d\xi \Big| \\
&\hspace{2em}\lesssim \| \chi_{|\xi|=T+O(1)} \ast \langle \xi \rangle^{-2-}\|^{\frac{1}{2}}_{L^{\infty}(\mathbb{R}^3)} \prod_{i=1}^3 \|f_i\|_{L^2_x} \lesssim \prod_{i=1}^3 \|f_i\|_{L^2_x}\, ,
\end{align*}
so that the desired estimate follows.\\
{\bf Case 2:} $|\xi_1| \le |\xi_2|$ $\Rightarrow$ $|\xi_2|+|\xi_3| \lesssim |\xi_2|$. \\
Arguing as in case 1 we obtain
$$m^*(\xi_1,\xi_2,\xi_3,T) = \frac{\langle \xi_2 \rangle^{\frac{1}{5}+} \chi_{|\xi_2|=|T|+O(1)}}{\langle \xi_1 \rangle^{\frac{1}{5}+\epsilon} \langle \xi_3 \rangle^{1+}} \, . $$
Case 2.1: $\langle \xi_3 \rangle \le \langle \xi_1 \rangle$.
This implies $\langle \xi_1 \rangle \gtrsim T$, because $|\xi_2| \gtrsim T$ and $|\xi_2| \sim |\xi_1|$. We obtain the same bound for $m^*$ as in case 1.2. \\
Case 2.2: $\langle \xi_1 \rangle \le \langle \xi_3 \rangle$.
Thus $\langle \xi_3 \rangle \gtrsim T$ and we obtain
$$ m^*(\xi_1,\xi_2,\xi_3,T) \lesssim \frac{\langle \xi_2 \rangle^{\frac{1}{5}+} \chi_{|\xi_2|=T+O(1)}}
{\langle \xi_1 \rangle^{\frac{1}{5}+\epsilon} \langle \xi_1 \rangle^{\frac{4}{5}-\epsilon+} \langle \xi_3 \rangle^{\frac{1}{5}+\epsilon}        }
 \lesssim \frac{\chi_{|\xi_2|=T+O(1)}}{\langle T \rangle^{\epsilon -} \langle \xi_1 \rangle^{1+}} \, . $$
In the same way as in case 1.2 we obtain the desired estimate.\\
{\bf Term 3:} Finally we have to consider the cubic terms in (\ref{1.13}).\\
1. We obtain as in (\ref{2.1}) the estimate
$$ \| A^{df} A^{df} \phi\|_{X^{-\frac{1}{4}+,-\frac{1}{4}+}_{|\tau|=|\xi|}} \lesssim \|A^{df}\|^2_{X^{\frac{4}{5}-,\frac{1}{2}+}_{|\tau|=|\xi|}} \|\phi\|_{X^{\frac{4}{5}-,\frac{1}{2}+}_{|\tau|=|\xi|}} \, . $$
2. As in (\ref{2.2}) we obtain
$$ \| A^{cf} A^{df} \phi\|_{X^{-\frac{1}{4}+,-\frac{1}{4}+}_{|\tau|=|\xi|}} \lesssim \|A^{cf}\|_{X^{1+,\frac{1}{2}+}_{\tau=0}} \|A^{df}\|_{X^{\frac{4}{5}-,\frac{1}{2}+}_{|\tau|=|\xi|}} \|\phi\|_{X^{\frac{4}{5}-,\frac{1}{2}+}_{|\tau|=|\xi|}} \, . $$
3. By Sobolev's and Strichartz' estimates (Lemma \ref{Lemma1}) we obtain
\begin{align*}
\| A^{cf} A^{cf} \phi\|_{X^{-\frac{1}{4}+,-\frac{1}{4}+}_{|\tau|=|\xi|}} &\lesssim \|A^{cf} A^{cf} \phi \|_{L^2_{xt}} \lesssim \|A^{cf}\|^2_{L^8_t L^6_x} \|\phi\|_{L^4_t L^6_x} \\
&\lesssim \|A^{cf}\|_{X^{1,\frac{3}{8}}_{\tau=0}}^2 \|\phi\|_{L^4_t H^{\frac{1}{4},4}_x} \lesssim  \|A^{cf}\|_{X^{1,\frac{3}{8}}_{\tau=0}}^2  \|\phi\|_{X^{\frac{3}{4},\frac{1}{2}+}_{|\tau|=|\xi|}} \, .
\end{align*}
Using claim 3 und claim 4
we complete the proof of claim 6 and of Prop. \ref{Prop.1}. As explained in section 2 this also proves Theorem \ref{Theorem}.


\begin{thebibliography}{999999}
\bibitem[AFS]{AFS} P. d'Ancona, D. Foschi, and S. Selberg: {\sl Atlas of products for wave-Sobolev spaces on ${\mathbb R}^{1+3}$ }. Transact. AMS 364 (2012), 31-63
\bibitem[B]{B} M. Beals: {\sl Self-spreading of singularities for solutions to semilinear wave equations}. Annals Math. 118 (1983), 187-214
\bibitem[KRT]{KRT} M. Keel, T. Roy and T. Tao: {\sl Global well-posedness of the Maxwell-Klein-Gordon equation below the energy norm}. Discrete Cont. Dyn. Syst. 30 (2011), 573-621
\bibitem[KMBT]{KMBT} S. Klainerman and M. Machedon (Appendices by J. Bougain and  D. Tataru): {\sl Remark on Strichartz-type inequalities}. Int. Math. Res. Notices 1996, no.5, 201-220
\bibitem[KM]{KM} S. Klainerman and M. Machedon: {\sl On the Maxwell-Klein-Gordon equation with finite energy}. Duke Math. J. 74 (1994), 19-44
\bibitem[MS]{MS} M. Machedon and J. Sterbenz: {\sl Almost optimal local well-posedness for the (3+1)-dimensional Maxwell-Klein-Gordon equations}. J. AMS 17 (2004), 297-359
\bibitem[P1]{P1} H. Pecher: {\sl Nonlinear small data scattering for the wave and Klein-Gordon equation}. Math. Z. 185 (1984), 261-270
\bibitem[P]{P} H. Pecher: {\sl Low regularity local well-posedness for the Maxwell-Klein-Gordon equations in Lorenz gauge}. Adv. Diff. Equ. 19 (2014), 359-386
\bibitem[S]{S} S. Selberg: {\sl Anisotropic bilinear $L^2$ estimates related to the 3D wave equation}. Int. Math. Res. Not. 2008, art. ID rnn107
\bibitem[ST]{ST} S. Selberg and A. Tesfahun: {\sl Finite energy global well-posedness of the Maxwell-Klein-Gordon system in Lorenz gauge}. Comm. PDE 35 (2010), 1029-1057
\bibitem[T]{T} T. Tao: {\sl Multilinear weighted convolutions of $L^2$-functions and applications to non-linear dispersive equations}. Amer. J. Math. 123 (2001), 838-908
\bibitem[T1]{T1} T. Tao: {\sl Local well-posedness of the Yang-Mills equation in the temporal gauge below the energy norm}. J. Diff. Equ. 189 (2003), 366-382
\bibitem[Y]{Y} J. Yuan: {\sl Global solutions of two coupled Maxwell systems in the temporal gauge}. Preprint arXiv:1504.00330
\end{thebibliography}
\end{document}